\def\C{{\mathbb C}}
\def\Z{{\mathbb Z}}
\def\R{{\mathbb R}}
\def\e{{\varepsilon}}
\def\g{{\gamma}}
\def\a{{\alpha}}
\def\d{{\delta}}
\def\FF{{\mathcal{F}}}
\def\+R{+_{_{ \!\! \R}}}
\def\hat{\widehat}
\def\bar{\overline}
\def\S{Schr\"{o}dinger }
\DeclareMathAlphabet{\mathpzc}{OT1}{pzc}{m}{it}
\def\nl{\vglue0.5truemm\noindent}
\numberwithin{equation}{section}
\newtheorem{theorem}{Theorem}[section]
\newtheorem{lemma}[theorem]{Lemma}
\newtheorem{proposition}[theorem]{Proposition}
\newtheorem{remark}[theorem]{Remark}
\newtheorem{convention}{Convention}
\numberwithin{equation}{section}
\begin{document}

\title[Zakharov-type system]{On global solutions of a Zakharov type system}

\author{Thomas Beck}\address{Princeton University}\email{tdbeck@math.princeton.edu}

\author{Fabio Pusateri}\address{Princeton University}\email{fabiop@math.princeton.edu}

\author{Phil Sosoe}\address{Princeton University}\email{psosoe@math.princeton.edu}

\author{Percy Wong}\address{Princeton University}\email{pakwong@math.princeton.edu}

\thanks{The second author was partially supported by NSF grant DMS 1265875.}

\begin{abstract}
We consider a class of wave-Schr\"{o}dinger systems in three dimensions with a Zakharov-type coupling.
This class of systems is indexed by a parameter $\gamma$ which measures the strength of the null form in the nonlinearity of the wave equation.
The case $\g = 1$ corresponds to the well-known Zakharov system, while the case $\g=-1$ corresponds to the Yukawa system.
Here we show that sufficiently smooth and localized Cauchy data lead to pointwise decaying global solutions which scatter, for any $\g \in (0,1]$.
\end{abstract}


\maketitle


\section{Introduction}

\subsection{Statement of the problem and main result}
We will consider the following parametrized family of systems in three spatial dimensions:
\begin{equation}
\label{ZSg0}
\left\{
\begin{array}{l}
i \partial_t u + \Delta u  =  u n
\\
\\
\Box n = {\Lambda}^{1+\g} {|u|}^2
\end{array}
\right.
\end{equation}
where $\Box := - \partial_t^2 + \Delta$, $\Lambda := |\nabla| = \sqrt{-\Delta}$, and $-1 \leq \g \leq 1$. Here we have
\begin{align*}
u: (x,t) \in \R^3 \times \R \to \C, \quad n: (x,t) \in \R^3 \times \R \to \R \, . 
\end{align*}
The case $\g = 1$ corresponds to the well-known Zakharov system,
modeling propagation of Langmuir waves in an ionized plasma \cite{Zakharov}.
The case $\g = -1$ corresponds to the (massless) Yukawa system,
which is a model for the interaction between a meson and a nucleon.
\eqref{ZSg0} is a special case of the models introduced in \cite{SZ2} (see Section 3 there)
\begin{equation}
\label{ZS0}
\left\{
\begin{array}{l}
i \partial_t u + L_1 u =  u n
\\
\\
L_2 n = L_3 {|u|}^2 \, ,
\end{array}
\right.
\end{equation}
where $L_1,L_2$ and $L_3$ are constant coefficient differential operators.  This class of systems is referred to as Davey-Stewartson (DS) systems in the work of Zakahrov-Schulman \cite[Section 3]{SZ2}. In the recent literature the name DS is associated to a specific 2 dimensional system of the form \ref{ZS0}, modeling the evolution of weakly nonlinear water waves travelling predominantly in one direction, in which the wave amplitude is modulated slowly in two horizontal directions. See, for example, \cite{GhS1, GhS2}.

Here we will consider \eqref{ZSg0} in the range $0<\g\leq 1$ and prove the following

\begin{theorem}\label{maintheo}
Let $\g \in (0,1]$ be given.
Then there exist $N = N(\g) \gg 1$, and a small constant $\e_0 = \e_0(\g) > 0$, such that for any initial data 
$(u_0,n_0,n_1) = (u,n,\partial_t n)(t=0)$ satisfying\footnote{Here 
$\langle x \rangle$ is used to denote $\sqrt{1+|x|^2}$, and the Besov norm $\dot B^0_{1,1}$ is defined in \eqref{Besov}.}
\begin{align}
\label{data}
& {\| u_0 \|}_{H^{N+1}(\R^3)} + {\| {\langle x \rangle}^2 u_0 \|}_{L^2(\R^3)} \leq \e_0 \, ,
\\
&\label{data2} 
{\left\| \left( \Lambda n_0, n_1 \right) \right\|}_{H^{N-1}(\R^3)} 
+ {\left\| \langle \Lambda \rangle \left( \Lambda n_0,  n_1 \right) \right\|}_{\dot{B}^0_{1,1}(\R^3)}
+ {\left\| {\langle x \rangle}^2 \left( n_0,n_1 \right) \right\|}_{H^1(\R^3)} 
  \leq \e_0 \, ,
\end{align}
there exists a unique global-in-time solution $(u,n)(t)$ to the Cauchy problem associated to \eqref{ZSg0}.
Moreover, there exists $0 < \a < 1/6$\footnote{One can choose $\a \sim 1/6$ for $\g \geq 1/2$.} such that
\begin{align}
{\| u(t) \|}_{L^{\infty}(\R^3)} \lesssim \e_0 {(1+t)}^{-1-\a}  \quad , \quad {\| n(t) \|}_{L^\infty(\R^3)} \lesssim \e_0 {(1+t)}^{-1} \, .
\end{align}
As a consequence, the solution $(u,n)(t)$ approaches a linear solution as $t \rightarrow \infty$. 
\end{theorem}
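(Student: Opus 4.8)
\emph{Set-up.} The plan is to run a bootstrap argument organized around the space--time resonance method. Introduce the Schr\"odinger profile $f(t) := e^{-it\Delta}u(t)$ and, setting $n_\pm := \Lambda n \pm i\partial_t n$, the wave profiles $g_\pm(t) := e^{\pm it\Lambda}n_\pm(t)$, so that $n = \tfrac12\Lambda^{-1}\big(e^{-it\Lambda}g_+ + e^{it\Lambda}g_-\big)$; Duhamel's formula then reads, schematically,
\[
f(t) = u_0 - i\int_0^t e^{-is\Delta}(u\,n)(s)\,ds, \qquad g_\pm(t) = g_\pm(0) \mp i\int_0^t e^{\pm is\Lambda}\Lambda^{1+\g}|u|^2(s)\,ds.
\]
Substituting the formula for $n$ into the first equation splits the Schr\"odinger nonlinearity into a genuinely quadratic piece $u\cdot(\text{linear wave})$ and a cubic piece $u\cdot e^{\mp i(s-\sigma)\Lambda}\Lambda^{\g}(u\bar u)$ (the $\Lambda^{-1}$ from reconstructing $n$ meeting the $\Lambda^{1+\g}$ in the wave source), so the whole problem reduces to multilinear estimates for a handful of oscillatory integrals carrying the Schr\"odinger--wave phase $\phi^\pm_{SW}(\xi,\eta) = |\xi|^2 - |\xi-\eta|^2 \mp|\eta|$ and, for the cubic ones, an additional inner phase $\psi^\pm(\eta,\zeta) = \mp|\eta| + |\zeta|^2 - |\eta-\zeta|^2$.

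\emph{The bootstrap norm.} On a time interval $[0,T]$ I would control: (i) the high Sobolev energies $\|u(t)\|_{H^{N+1}} + \|(\Lambda n,\partial_t n)(t)\|_{H^{N-1}}$, allowed to grow by at most a factor $\langle t\rangle^{\delta}$ with $\delta=\delta(\g)\ll 1$; (ii) weighted bounds on the profiles, e.g.\ $\|\langle x\rangle^2 f(t)\|_{L^2}$ and a wave analogue, permitted a polynomial growth $\langle t\rangle^{\delta'}$; and (iii) a dispersive ``$Z$--norm'' controlling $\hat f(t,\xi)$ and the wave profiles in weighted $L^\infty_\xi$, refined by Littlewood--Paley near the sphere $|\xi| = 1/2$ and near low frequencies, from which $\|u(t)\|_{L^\infty}\lesssim \e_0\langle t\rangle^{-1-\a}$ and $\|n(t)\|_{L^\infty}\lesssim \e_0\langle t\rangle^{-1}$ follow via the corrected dispersive estimates for the Schr\"odinger and $3$D wave flows (the wave decay at rate $\langle t\rangle^{-1}$ needing the $\dot B^0_{1,1}$ control of the data in \eqref{data2} to absorb the one-derivative loss). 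Local well-posedness in this class — immediate from the assumed regularity together with propagation of the weights — reduces Theorem~\ref{maintheo} to the a priori bound $\|(u,n)\|_{X_T}\lesssim \e_0 + \|(u,n)\|_{X_T}^2$, whence global existence by continuity; and since the cubic and the quadratic wave--Schr\"odinger Duhamel integrands turn out to be integrable in $L^2$ once one exploits $\a>0$ through a H\"older/interpolation split, the profiles converge as $t\to\infty$, which gives scattering.

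\emph{Energy and weighted estimates.} These are essentially routine. For the wave energy one integrates $\|\Lambda^{1+\g}|u|^2(s)\|_{H^{N-1}}\lesssim \|u(s)\|_{W^{k,\infty}}\|u(s)\|_{H^{N+1}}\lesssim \e_0^2\langle s\rangle^{-1-\a+\delta}$, which is summable in time because $\a>0$; the Schr\"odinger energy is handled the same way, except that its $n$ factor decays only like $\langle s\rangle^{-1}$, which is exactly what forces the permitted slow growth $\langle t\rangle^{\delta}$. The weighted estimates amount to commuting $x = i\nabla_\xi$ (and the wave analogue) through the Duhamel integrals; the dangerous contributions are those in which the derivative lands on $e^{is\phi}$, producing powers of $s$, and these are absorbed using the dispersive decay together with the vanishing of $\Lambda^{1+\g}$ at zero output frequency — the null structure of the wave equation — and the spatial localization of the profiles.

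\emph{The dispersive estimate for $u$: the main obstacle.} The crux is the $Z$--norm bound for $f$, i.e.\ controlling $\hat f(t,\xi)$ in weighted $L^\infty_\xi$. One decomposes, via Littlewood--Paley, according to the sizes of the phase and of its $\eta$--gradient: away from the time--resonant set $\{\phi=0\}$ one integrates by parts in $s$ (and in $\sigma$ for the cubic term), gaining $\phi^{-1}$; away from the space--resonant set $\{\nabla_\eta\phi=0\}$, which for $\phi^\pm_{SW}$ is the sphere $|\xi-\eta| = 1/2$, one integrates by parts in $\eta$, gaining $|\nabla_\eta\phi|^{-1}$ at the cost of a factor $s$ paid for by the $\langle x\rangle^2$ weights. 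On the space--time resonant set — for the bilinear wave term the output frequency $\eta=0$, for the cubic term $\{\eta=0\}$ together with the sphere $|\xi|=1/2$ — the wave term is saved by the vanishing of $\Lambda^{\g}$ there, but $\hat f(t,\xi)$ genuinely concentrates near the sphere, and to bound its contribution to $\|u(t)\|_{L^\infty}$ one cannot afford $\langle t\rangle^{-3/2}\|\hat f\|_{L^\infty_\xi}$ and must instead trade $L^\infty_\xi$ for an $L^2_\xi$ bound on a thin neighborhood of the sphere, whose thickness is then optimized against the time factors. That optimization is what forces $\|u(t)\|_{L^\infty}$ to decay only like $\langle t\rangle^{-1-\a}$ rather than the linear rate $\langle t\rangle^{-3/2}$, produces the threshold $\a<1/6$, and makes $\a$ degrade (and $N(\g)$ blow up) as $\g\to 0^+$, since the order $1+\g$ of vanishing of the wave source then approaches the critical value $1$ and the null structure stops compensating. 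Once all contributions are collected the bootstrap closes, and the theorem follows.
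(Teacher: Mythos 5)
Your general skeleton (profiles, Duhamel in Fourier space, a bootstrap with growing energies and weights, resonance analysis at $\eta=0$ and near the sphere of radius $1/2$) is in the right spirit, but at the three places where the argument actually has to work your sketch either asserts the conclusion or relies on a mechanism that does not obviously close, and it diverges from what is needed. First, the $t^{-1}$ decay of the wave component: a weighted $L^\infty_\xi$ (``$Z$-norm'') bound on the wave profile does not feed into the wave dispersive estimate \eqref{linearwave0}, which requires $\dot{B}^{2}_{1,1}$-type control (an $L^1_x$ norm with two derivatives) of the nonlinear part $G$ of the profile; obtaining this for the quadratic wave Duhamel term is one of the principal difficulties, and in the paper it is done by splitting $f$ into spatially localized pieces $f_{\leq s^{1/8}}$ and $f_{\geq s^{1/8}}$ and combining the null identity \eqref{symg0} with the improved low-frequency behavior of the wave flow (Section \ref{secdecayw}). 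Your proposal gives no route from your norm to this decay. Second, you call the weighted estimates ``essentially routine,'' but for $x^2F$ the dangerous contribution carries the factor $s^2\eta_i^2$ (see \eqref{eqn: F1}), and the wave null form is of no use there: the paper needs the pseudo-scaling identity \eqref{dxiphi00} to convert $\nabla_\xi\phi$ into $\nabla_\eta\phi$ and $\phi$, and, since no bound on $x^2G$ is available, the intermediate $L^p$ bounds \eqref{aprioriG2}--\eqref{aprioriG3} on $e^{\mp it\Lambda}x\Lambda G$ and $e^{\mp it\Lambda}\Lambda^{-1}G$. It is precisely this step that forces $\|x^2f\|_{L^2}$ to grow like $t^{1-2\alpha-\delta}$, which is the actual source of the reduced decay $\|u\|_{L^\infty}\lesssim t^{-1-\alpha}$.

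This also exposes an internal inconsistency and a misattribution in your outline. You allow $\|\langle x\rangle^2 f\|_{L^2}$ only a small growth $t^{\delta'}$; if that were provable it would give $\|u\|_{L^\infty}\lesssim t^{-3/2+}$, contradicting your own claim that the decay saturates at $t^{-1-\alpha}$ with $\alpha<1/6$ (and it is in fact not provable by these methods: already the boundary term \eqref{eqn: F11} grows like $t^{2/3+\delta}$). Moreover, the thresholds $\alpha\leq\gamma/2-10\delta$ and $\alpha\leq 1/6-10\delta$ in \eqref{eqn: alpha gamma} do not arise from an $L^2_\xi$ thin-shell optimization near $|\xi|=1/2$: they come from the wave dispersive rates $t^{-(1-2/p)}$ paired with the strength $|\xi|^\gamma$ of the null form in the estimates for $G$ (Lemmas \ref{prop: G2} and \ref{prop: G3}, where one gets $t^{-\gamma}$ for small $\gamma$ and at best $t^{-1/3}$ in $L^3$), fed back into the $x^2F$ estimate. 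You also offer no mechanism by which the weak null form (vanishing only to order $\gamma$, with $\gamma>0$ arbitrarily small) is compensated; in the paper this is exactly where \eqref{symg0} is combined with low-frequency gains of the wave propagator in $L^p$/Besov norms. As written, your proposal is a plausible programme in the style of the space-time resonance method, but the key estimates are not supplied and the claimed source of the constraints on $\alpha$ and $N(\gamma)$ is conjectural, so there is a genuine gap.
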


The proof shows that the assumptions \eqref{data2} on the initial data are somewhat stronger than necessary.
We have chosen to display these conditions for simplicity. 
In Section 2, we will give the strategy of the proof, and describe some more of the properties satisfied by the solution $(u,n)(t)$.

\subsection{Motivation and previous results}

\subsubsection{The Zakharov system}
The Zakharov system, (\eqref{ZSg0} with $\g=1$),
\begin{equation}
\label{Z}
\tag{Z}
\left\{
\begin{array}{l}
i\partial_t u + \Delta u =  n u
\\
\\
\Box n = - \Delta |u|^2,
\end{array}
\right.
\end{equation}
was derived by V. Zakharov in \cite{Zakharov} to model Langmuir waves in plasma, and has since then
been under intensive investigation by physicists and mathematicians (see \cite{HamPlasma,SulemBook} for some background). 
As we remarked above, it is a particular example of Zakharov-Schulman systems, introduced in \cite{SZ2} and studied in \cite{HamPlasma}, 
\cite{KPV1}, \cite{KW}.

From the mathematical side, there has been considerable work on the local and global well-posedness of solutions 
with rough data  through the works of Kenig, Ponce and Vega \cite{KPV1}, 
Bourgain and Colliander \cite{BC}, Ginibre, Tsutsumi and Velo \cite{GTV}, Bejenaru, Herr, Holmer and Tataru \cite{BHHT} and Bejenaru and Herr \cite{BH}. 
In particular, global well-posedness for small data in the energy space was obtained in \cite{BC} by combining local well-posedness and conservation laws.
(See the references in the cited works for previous well-posedness results).
Many works have also dealt with singular limits related to the Zakharov system and with the rigorous derivation of the system in various limiting regimes from other equations and vice versa; see for example \cite{Texier}, 
\cite{MN} 
and references therein.

Concerning the scattering question, most of the previous work has been carried out for the  final value problem, i.e. data at $t = \infty$, 
as in the papers of Ozawa and Tsutsumi \cite{OT}, Ginibre and Velo \cite{GV-Zak}, and Shimomura \cite{ShimoZak}. Similar work has also been dedicated to other coupled systems of \S and wave equations, see for example the works of Ginibre and Velo \cite{GV-WS1,GV-WS2,GV-WS3}, Shimomura \cite{ShimoWS,ShimoMS}, and references therein. The first work that deals with scattering for the Cauchy problem of the Zakharov system (or any other Wave-\S systems in $3$ dimensions) is by Guo and Nakanishi \cite{GN}, where they considered small radial solutions in the energy space. 
In \cite{HPS}, the second author, Hani and Shatah proved pointwise decay and scattering for sufficiently smooth and localized solutions of \eqref{Z}.
Global dynamics below the ground state, under radial symmetry, have been analyzed in \cite{GNW2}.
The results in \cite{GN} and \cite{HPS} were then strengthened in \cite{GLNW}, where the authors 
used a generalized Strichartz estimate to obtain scattering for data in the energy space with additional angular regularity.

\subsubsection{Parameter range; The Yukawa System} The restriction $0<\gamma \leq 1$ in Theorem \ref{maintheo} is due to our methods,
but it is conceivable that similar techniques can apply for some $\gamma \le 0$\footnote{One 
important issue that arises in the case $\g < 0$ is the presence of a ``singularity'' in the nonlinear term of the second equation
of \eqref{ZSg0}, which is apparent when \eqref{ZSg0} is written as the first order system \eqref{Z_0} below.
While some of the arguments we present are still valid in this case, some others break down and would require substantial modifications
to yield the same final result.}.
We note that in the case $\gamma=-1$ (the Yukawa Wave-\S system), 
the expectation is to have modified scattering, 
that is, the behavior of the solutions for large $t$ does not coincide with that of linear solutions.
This was proven to be the case for the final value problem in \cite{GV-WS1,GV-WS2,GV-WS3}, \cite{ShimoWS} and \cite{GV-WSrev}.
Because of this expectation, we decide here to pursue a proof of global existence and scattering for \eqref{ZSg0}
based on the use of weighted spaces, rather than the approach of \cite{GN} based on radial Strichartz estimates\footnote{We 
believe it is likely that the techniques used in \cite{GN} apply to \eqref{ZSg0} in the parameter range 
that we consider here, to obtain scattering for small data in the energy space under the assumption of radial symmetry.}.
Indeed, our approach allows us to extract more precise information about the pointwise time decay of solutions,
which is an essential part of the analysis when dealing with nonlinear asymptotic behavior\footnote{For
some examples of modified scattering results in weighted spaces, 
see the papers \cite{HN,HNKdV,HNBO,KP,FNLS} which deal with semilinear equations,
\cite{Tsutsumi,ShimoMS} for results on the final value problem for field equations with long-range potentials, e.g. Maxwell-Schr\"odinger, 
and \cite{2dWW} for a recent example involving a quasilinear system, the water waves equations.}.
In the future we hope to refine our techniques, and possibly combine them with some of the recent advances in the area, see \cite{IP,IP2} for example,
to further push the admissible range of $\g$ towards $-1$.

\subsubsection{Techniques}
We briefly discuss the technical features of our argument. For a more detailed discussion, see Section \ref{secstrategy}. 
The strategy follows the general scheme of much recent work on small global solutions of dispersive systems, see for example \cite{GMS1,GMS2,GNT1,IP},
and \cite{HPS} which is more closely related to the problem we are considering. 
The vector fields method of Klainerman \cite{K0} cannot be applied 
to deal with \eqref{ZSg0}, because of the lack of space-time transformations leaving the combined Schr\"odinger and wave equations invariant. 
On the other hand, we use the observation, which appeared in \cite{HPS}, that $\Delta$ in the right hand side of second equation of \eqref{Z},
plays the role of a Klainerman null form \cite{K1}, allowing us to integrate by parts to gain decay\footnote{The
integration by parts argument in Fourier space is related to the space-time resonance method \cite{GMS1,GMS2},
and was used to deal with wave equations satisfying a nonresonance condition, akin to Klainerman's null condiditon \cite{K1}, in \cite{PS}.};
see Section \ref{secpre} and the identity \eqref{symg0}.
We show how this type of argument can be still used for \eqref{ZSg0}, where $\g>0$ can be arbitrarily small, 
if one combines it with a careful exploitation of the improved low frequency behavior of solutions of the linear wave equation.
An important role is also played by the use of the pseudo-scaling identity \eqref{dxiphi00} 
which allows to integrate by parts and estimate weighted norms of the Schr\"{o}dinger component.
Another key point is the spatial profile decomposition, also used in \cite{HPS}, to obtain decay for the wave component.

\section{Preliminary setup}\label{secpre}
Writing $w_{\pm}=\Lambda^{-1} (i\partial_t\pm \Lambda) n $, the system \eqref{Z} becomes
\begin{equation}
\label{Z_0}
\tag{Z$_0$}
\left\{
\begin{array}{l}
i\partial_t u +\Delta u = \frac{1}{2}(w_+ u - w_- u)
\\
\\
i\partial_t w_\pm \mp \Lambda w_\pm = \Lambda^\g {|u|}^2  \, .
\end{array}
\right.
\end{equation}
Let  $f = e^{-it\Delta} u$ and $g_\pm=e^{\pm it\Lambda} w_\pm$ denote the profiles, 
and let $\hat f = \FF f$ and $\hat g =\FF g$ denote  their Fourier transforms.
Duhamel's formula in Fourier space then reads
\begin{subequations}
\label{eq:profile}
\begin{align}
\label{inteqf}
&\hat f(\xi,t) = \hat f(\xi,0) + \sum_{\pm} \mp i\int_0^t \int_{\R^3} e^{is \phi_{\pm}(\xi,\eta)}\hat f(\xi-\eta,s) \hat g_\pm(\eta,s) 
  \mathrm{d}\eta \mathrm{d}s
\\
\label{inteqg}
&\hat g_\pm(\xi,t)  = \hat g_\pm (\xi,0)  - i\int_0^t \int_{\R^3} {|\xi|}^\g e^{is \psi_{\pm}(\xi,\eta)}
			\hat f(\xi-\eta,s) \overline{\hat{f}} (\eta,s) \mathrm{d}\eta \mathrm{d}s \, ,
\end{align}
\end{subequations}
where the phases are
 \begin{subequations}
\begin{align}
\begin{split}
\label{phi}
& \phi_\pm(\xi, \eta)  = {|\xi|}^2 - {|\xi-\eta|}^2 \pm |\eta| = 2\xi \cdot \eta -|\eta|^2 \pm |\eta| 
\end{split}
\\[.5 em]
\label{psi}
& \psi_{\pm}(\xi,\eta)  = \mp |\xi| - {|\xi-\eta|}^2 + {|\eta|}^2 = \mp |\xi| - {|\xi|}^2 + 2 \xi \cdot \eta \, .
\end{align}
\end{subequations}
We define the functions $F_{\pm}$ and $G_{\pm}$ by,
\begin{align}
\label{F}
F_\pm (\xi,t) & \overset{def}{=}  \FF^{-1} \int_0^t \int_{\R^3} e^{is \phi_{\pm}(\xi,\eta)}\hat f(\xi-\eta,s) \hat g_\pm(\eta,s) 
  \mathrm{d}\eta \mathrm{d}s \, ,
\\
\label{G}
G_\pm (\xi,t) & \overset{def}{=} \FF^{-1} \int_0^t \int_{\R^3} {|\xi|}^\g e^{is \psi_{\pm}(\xi,\eta)}\hat f(\xi-\eta,s) \overline{\hat{f}} (\eta,s) 
\mathrm{d}\eta \mathrm{d}s \, .
\end{align}



\subsection{Norms and a priori bounds}\label{secnorms}
We denote by $\dot{B}^s_{p, q}$ the Besov space defined by the norm 
\begin{equation}\label{Besov}
\|u\|_{\dot{B}^s_{p,q}}:=\left\| 2^{sk}\|P_k u\|_{L_x^p(\R^3)}\right\|_{l_k^q(\Z)}
\end{equation}
where $P_k$ denotes the Littlewood-Paley projection onto frequencies $|\xi|\sim 2^k$.

Given $\g >0$, we choose $\delta,\alpha>0$ and $N\gg1$ such that\footnote{Note 
that the first inequality in \eqref{eqn: alpha gamma} places a greater restriction on the size of $\alpha$ precisely when $0<\gamma<1/3$,
whereas for $\gamma\geq1/3$ we can choose $\alpha>0$ arbitrarily close to $1/6$.}
\begin{align}
\nonumber
& 5N^{-1} \leq \d \quad , \quad  \d \ll 1 \, ,
\\
\label{eqn: alpha gamma}
& 3\delta < \alpha \leq \gamma/2 - 10\delta \quad , \quad \alpha \leq 1/6 - 10\delta \, .
\end{align}
The proof of Theorem \ref{maintheo} follows a bootstrap argument in the Banach space $X$ defined by the norm\footnote{Local 
existence in time of solutions belonging to weighted Sobolev spaces can be established by standard techniques.}
\footnote{Without loss of generality we can restrict our attention to times $t \geq 1$.}:
\begin{align}
\label{norm}
\begin{split}
{\|(u,w_\pm)\|}_{X}  \overset{def}{=}  \sup_t   \left( t^{-\d}  {\|  f(t)  \|}_{H^{N+1}} \right. & +  t^{-\d}{\| xf (t) \|}_{L^2} 
  +  t^{-1+2\a+\d}  {\| {|x|}^2 f (t) \|}_{L^2}
\\
& + \, {\|  g_\pm(t) \|}_{H^{N}} 
+ t \left. {\|  e^{\mp it\Lambda} g_\pm (t) \|}_{\dot{B}^0_{\infty,1} }   \right) \, .
\end{split}
\end{align}
We choose $\e_1 = \e_0^{2/3}$ and assume a priori bounds on the quantities appearing in the $\|\cdot\|_{X}$ norm:
\begin{equation}
\label{apriorif}
 \quad  {\| f(t) \|}_{H^{N+1}}  \leq \e_1 t^\d  \, , 
\quad
{\| x f(t) \|}_{L^2} \leq \e_1 t^\d \, , 
\quad{\| {| x |}^2 f(t) \|}_{L^2} \leq \e_1 t^{1 - 2\a - \d}  \, , 
\end{equation}
and
\begin{align}
\label{aprioriG1}
& {\|  g_\pm(t) \|}_{H^N}  \leq \e_1 \, ,
\quad
{\| e^{\mp it\Lambda} g_\pm(t) \|}_{ \dot{B}^0_{\infty,1} } \leq \e_1 t^{-1}  \, .
\end{align}
As an intermediate step, we include the additional a priori bounds for $G_{\pm}$:
\begin{align}
\label{aprioriG2}
& {\| e^{\mp it\Lambda} x \Lambda G_\pm(t) \|}_{ L^{4/(1+\g)} } \leq \e_1 t^{- 1/4 + 3\g/4 -2\a -3\d}  \, , 
\\
\label{aprioriG3}
& {\| e^{\mp it\Lambda} \Lambda^{-1} G_\pm(t) \|}_{ L^3 } \leq \e_1 t^{-2\a - 3\d} \, ,
\quad
{\| \Lambda^{1/2} x G_\pm(t) \|}_{L^2} \leq \e_1 \, . 
\end{align} 

\begin{remark}
In contrast to \cite{HPS}, we do not place an a priori bound on $x^2G_{\pm}(t)$. Instead, we make greater use of the linear dispersive estimates for the wave group, and place an a priori bound on $e^{\mp it\Lambda} x \Lambda G_{\pm}(t)$ and $e^{\mp it\Lambda} \Lambda^{-1} G_{\pm}(t)$ in suitable $L^p$ spaces. This greatly simplifies many of the estimates in \cite{HPS}, while yielding the same exact conclusions for $\g \geq 1/3$.
\end{remark}

To obtain our result we will then show
\begin{align*}
 {\|(F_\pm, G_\pm)\|}_{X} \lesssim \e_1^2 \, ,
\end{align*}
which, together with the initial assumptions \eqref{data}-\eqref{data2} (see also \eqref{decaywave0} below), will give
\begin{align*}
 {\|(u, w_\pm)\|}_{X} \lesssim \e_0 + {\|(F_\pm, G_\pm)\|}_{X} \lesssim \e_0 + \e_1^2 \lesssim \e_0 + \e_0^{4/3} \, ,
\end{align*}
and guarantee a global-in-time solution belonging to $X$, provided $\e_0$ is chosen small enough.


\begin{remark}[Linear dispersive estimates]
From the linear estimates for the Schr\"odinger group
\begin{align}\label{disp}
& {\|e^{it\Delta} f\|}_{L^6} \lesssim  \frac{1}{t} {\| x f \|}_{L^2}
\quad , \quad 
  {\| e^{it\Delta} f \|}_{L^\infty} \lesssim  \frac{1}{t^{\frac 32}} {\| x f \|}^{\frac 12}_{L^2} {\| x^2 f \|}^{\frac 12}_{L^2} \, ,
\end{align}
we deduce  that the $X$ norm bounds
\begin{align}
\label{SL^6}
{\| e^{it\Delta} f \|}_{L^6}  & \lesssim \frac{1}{ { t }^{1-\d}} {\| u \|}_X \, ,
\\
\label{decayu}
{\| e^{it\Delta} f \|}_{L^\infty} & \lesssim \frac{1}{ { t }^{1+\a}} {\| u \|}_X  \, .
\end{align}
Moreover, by the linear dispersive estimate for the wave equation
\begin{equation}
\label{linearwave0}
{\| e^{it\Lambda} h \|}_{\dot{B}^0_{p,r}} \lesssim \frac{1}{t^{1-2/p}} {\| h \|}_{ {\dot{B}^{2(1-2/p)}_{{p^\prime},r}} } \quad , \quad p \geq 2 \, ,
\end{equation}
(cf. for example \cite{SS}), and the fact that $g_\pm(0) = \Lambda^{-1} i n_1 \pm n_0$, 
we see that \eqref{data2} implies
\begin{equation}
\label{decaywave0}
{\| e^{\mp i t\Lambda} g_{\pm}(0) \|}_{ \dot{B}^0_{\infty,1} } \lesssim \frac{\e_0}{t} \, .
\end{equation}
Finally, we note that \eqref{linearwave0} with $r = 2$, and embeddings between Besov and Sobolev spaces, 
gives
\begin{equation}
\label{linearwave}
{\| e^{it\Lambda} h \|}_{L^p} 
    \lesssim \frac{1}{ t^{1-2/p} } {\left\|  \Lambda^{2(1-2/p)} h \right\|}_{ L^{p^\prime} } \, .
\end{equation}
\end{remark}

\subsection{Strategy of the proof}\label{secstrategy}
From the definition of the $X$-norm, we see that in order to close our argument we need to obtain estimates on
high Sobolev norms of $(u,w_\pm)$ and weighted norms of $f$, and pointwise bounds for $w_\pm$.
Bounds on high Sobolev norms follow via standard energy estimates. 
To show decay for $w_\pm$ we use the weighted $L^2$ bounds on $f_\pm$.
To eventually bound weighted norms of $f_\pm$ we use the intermediate estimates \eqref{aprioriG2}-\eqref{aprioriG3} on $G_\pm$.
A key aspect is that the system \eqref{eq:profile} has null resonances, which we can use in combination with the space time resonance method 
to obtain weighted and pointwise bounds.

\subsubsection{\it Estimates for $G_\pm$}
We notice that the phase $\psi_{\pm}(\xi,\eta)$ satisfies,
\begin{equation}
\label{symg0}
|\xi|=\frac{1}{2}\frac{\xi}{|\xi|} \cdot \nabla_\eta \psi_\pm(\xi,\eta) \, .
\end{equation}
This means that the factor of $|\xi|^{\gamma}$ in \eqref{G} gives the equation a resonant structure,
although we see that this becomes weaker as $\gamma$ decreases towards $0$. 
We can thus use \eqref{symg0} to integrate by parts in $\eta$ and gain decay in $s$. 
In particular, using this together with Sobolev embedding and the a priori bounds on $f$, 
we can obtain a uniform in time estimate for $x\Lambda^{1/2}G_{\pm}$ in $L^2$. 
Carefully exploiting the linear dispersive estimates for wave group gives the estimates for
$e^{\mp it\Lambda}x\Lambda G_{\pm}$ and $e^{\mp it\Lambda}\Lambda^{-1} G_{\pm}$. 
These estimates are presented in Section \ref{secG}.

\subsubsection{$L^\infty$ bounds}
These are obtained similarly to \cite{HPS}.
The pointwise decay of $e^{it\Delta} f$, see \eqref{decayu}, is a direct consequence of the weighted estimates for $xf$ and $|x|^2f$.
To obtain a $t^{-1}$ pointwise decay for $e^{\mp it\Lambda} G_\pm$,
we use the improved small frequency behavior of solutions of the linear wave equation, see \eqref{linearwave0}, the identity \eqref{symg0}, and 
a similar argument to \cite{HPS}. Some of the details for the estimate of $e^{\mp it\Lambda}G_\pm$ are presented in Section \ref{secdecayw}.

\subsubsection{Weighted $L^2$ estimates for $F_\pm$}  
To obtain the desired weighted estimates for $F_{\pm}$, we use the (pseudo-scaling) identity
\begin{equation}
\label{dxiphi00}
\nabla_\xi \phi_\pm = - 2\eta = - 2 \frac{\eta}{|\eta|} \left( \frac{\eta}{|\eta|}\cdot \nabla_\eta \phi_\pm \right) 
  - 2\frac{\phi_\pm}{|\eta|}\frac{\eta}{|\eta|} \, .
\end{equation}
More specifically, calculating $xF_{\pm}$ in Fourier space involves applying a derivative in $\xi$ to $\hat{F}_{\pm}$.
When this derivative is applied to the factor of $e^{is\phi_{\pm}}(\xi,\eta)$ we obtain a factor of $\nabla_\xi \phi_\pm (\xi,\eta)$.
We can then use \eqref{dxiphi00} to express $\nabla_\xi \phi_\pm (\xi,\eta)$ in terms of $\nabla_\eta \phi_\pm(\xi,\eta)$ and $\phi_{\pm}(\xi,\eta)$.
This allows us to integrate by parts one time in both space and time, and the estimate for $xF_{\pm}$ in $L^2$
then follows from the a priori estimates for $f$ and $G_{\pm}$.

The equality in \eqref{dxiphi00} is also used in the estimate of $x^2F_{\pm}$. 
Applying two derivatives in $\xi$ to $\hat{F}_{\pm}(\xi,s)$, we find that one term contains a factor of $(\nabla_\xi \phi_\pm(\xi,\eta))^2$.
If, as in \cite{HPS}, we use \eqref{dxiphi00} to integrate by parts twice, we end up with a term containing a factor of $x^2G_{\pm}$.
However, we no longer have an a priori estimate on $x^2G_{\pm}$, so we do not proceed in this way. Instead, for this term in $x^2F_{\pm}$,
we only integrate by parts in $\eta$ once, 
and make use of the $L^p$ estimates \eqref{aprioriG2}-\eqref{aprioriG3} 
on $e^{\mp it\Lambda}x\Lambda G_{\pm}(t)$ and $e^{\mp it\Lambda}\Lambda^{-1}G_{\pm}(t)$.
We carry out these estimates in Section 4.

\subsection{Energy Estimates and high frequency cutoff}\label{secenergy}
We have the following:
\begin{proposition}
\label{proenergy}
Let $F_\pm$ and $G_\pm$ be given by \eqref{F} and \eqref{G} respectively. Then, for ${\| (u, w_\pm) \|}_X \leq \e_1$, we have
\begin{equation*}
 {\| G_\pm (t) \|}_{H^{N}}  + t^{-\d} {\| F_\pm (t) \|}_{H^{N+1}} \lesssim \e_1^2 \, .
\end{equation*}
\end{proposition}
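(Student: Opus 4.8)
The plan is to prove the two bounds separately via energy estimates on the Duhamel terms $F_\pm$ and $G_\pm$, treating them as solutions of the inhomogeneous Schr\"odinger and wave equations respectively, and then converting the a priori $X$-norm control into control of the source terms through fractional Leibniz and Sobolev embedding.

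\medskip

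\textbf{The estimate for $G_\pm$.} Recall $G_\pm$ solves $i\partial_t (e^{\mp it\Lambda} G_\pm) \cdot (\text{phase}) = \Lambda^\g |u|^2$ in the sense that $\partial_t \hat G_\pm(\xi,t) = -i|\xi|^\g e^{\mp it|\xi|}\widehat{|u|^2}$, up to the profile conjugations; equivalently $\|G_\pm(t)\|_{H^N} \lesssim \int_0^t \|\Lambda^\g (|u|^2)(s)\|_{H^N}\,ds$ does not close directly, so instead I would work in Fourier space with the oscillatory integral \eqref{G} but for the energy bound simply discard the oscillation (it is not needed here) and estimate $\|\Lambda^\g(|u|^2)\|_{H^N}$. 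The point is that $\g \le 1 \le N$, so $\Lambda^\g$ costs at most one derivative; by the fractional Leibniz rule and $H^N$ being an algebra (for $N \gg 1$, $N > 3/2$), $\|\Lambda^\g(|u|^2)(s)\|_{H^N} \lesssim \|u(s)\|_{H^{N+1}}\|u(s)\|_{L^\infty}$, or more carefully split as $\|u\|_{H^{N+1}}\|u\|_{H^2} $ using Sobolev embedding $H^2 \hookrightarrow L^\infty$. Now $\|u(s)\|_{H^{N+1}} = \|f(s)\|_{H^{N+1}} \le \e_1 s^\d$ and $\|u(s)\|_{L^\infty} = \|e^{is\Delta}f(s)\|_{L^\infty} \lesssim \e_1 s^{-1-\a}$ by \eqref{decayu}. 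So the integrand is $\lesssim \e_1^2 s^{\d - 1 - \a}$, and since $\a > 3\d > \d$ by \eqref{eqn: alpha gamma}, the exponent $\d - 1 - \a < -1$, giving a convergent integral uniformly in $t$ and the bound $\|G_\pm(t)\|_{H^N} \lesssim \e_1^2$.

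\medskip

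\textbf{The estimate for $F_\pm$.} Here $F_\pm$ is the Schr\"odinger-Duhamel term with source $\tfrac12(w_+ u - w_- u)$, so $\|F_\pm(t)\|_{H^{N+1}} \lesssim \int_0^t \|w_\pm(s)u(s)\|_{H^{N+1}}\,ds$. The issue is that $N+1$ derivatives on the product: by fractional Leibniz, $\|w u\|_{H^{N+1}} \lesssim \|w\|_{H^{N+1}}\|u\|_{L^\infty} + \|w\|_{L^\infty}\|u\|_{H^{N+1}}$. The second piece is controlled by $\|w_\pm(s)\|_{L^\infty}\lesssim \|e^{\mp is\Lambda} g_\pm(s)\|_{\dot B^0_{\infty,1}} \lesssim \e_1 s^{-1}$ and $\|u(s)\|_{H^{N+1}} \le \e_1 s^\d$, giving integrand $\lesssim \e_1^2 s^{\d - 1}$, whose integral over $[0,t]$ is $\lesssim \e_1^2 t^\d$ since $\d$ is small but the power of $s$ is $> -1$ — precisely matching the weight $t^{-\d}$ in the $X$-norm. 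The first piece, however, has $N+1$ derivatives landing on $w$, but the $X$-norm (see \eqref{norm}, \eqref{aprioriG1}) only controls $g_\pm$ in $H^N$, one derivative short. This is the main obstacle, and it is why the proposition is stated with the precise loss $t^{-\d}\|F_\pm\|_{H^{N+1}}$: one resolves it by a high-frequency cutoff argument (as the subsection title ``high frequency cutoff'' indicates). Namely split $w = P_{\le s^\beta} w + P_{> s^\beta} w$; on low frequencies $\|P_{\le s^\beta} w\|_{H^{N+1}} \lesssim s^\beta \|w\|_{H^N} \le \e_1 s^\beta$, while on high frequencies one trades regularity using the weighted/energy bounds on $G_\pm$ from the first part together with Bernstein, paying a negative power of $s^\beta$; choosing $\beta$ small relative to $\d$ (consistent with $5N^{-1}\le\d$) makes both contributions integrable with the allowed $s^\d$ growth. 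The remaining check is that $\|u(s)\|_{L^\infty}\lesssim \e_1 s^{-1-\a}$ beats any polynomial loss from the cutoff. Assembling the two parts and integrating in $s$ yields $t^{-\d}\|F_\pm(t)\|_{H^{N+1}} \lesssim \e_1^2$, completing the proof.
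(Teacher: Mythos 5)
Your estimate for $G_\pm$ is correct and is the standard argument (the paper itself gives no details here, deferring to [HPS]): since $\Lambda^\g|u|^2$ involves only $u\in H^{N+1}$ and $G_\pm$ is measured one derivative lower, the product estimate plus the decay $\|u\|_{L^\infty}\lesssim \e_1 s^{-1-\a}$ with $\a>3\d$ closes uniformly in $t$. The first half of your $F_\pm$ estimate (all top derivatives on $u$, paired with $\|w\|_{L^\infty}\lesssim \e_1 s^{-1}$ via \eqref{aprioriG1}) is also fine and is what produces the $t^{\d}$ growth; note in passing that this, not the derivative loss, is the reason for the factor $t^{-\d}$ in the statement.

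The genuine gap is in your treatment of the term where $N+1$ derivatives land on $w$. The cutoff $w=P_{\le s^\beta}w+P_{>s^\beta}w$ does not resolve it: restricting to frequencies $>s^\beta$ gains a negative power of $s$ only when you measure $w$ in a \emph{weaker} norm than $H^N$ (this is exactly \eqref{P_k}); Bernstein in the direction $H^N\to H^{N+1}$ on the high-frequency piece costs an unbounded factor $\sup_{2^k>s^\beta}2^k$, and none of the a priori bounds you invoke control $N+1$ derivatives of $w$ or $G$ — the bounds \eqref{aprioriG2}--\eqref{aprioriG3} are low-regularity weighted $L^p$ estimates and give nothing at the $H^{N+1}$ level. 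Indeed no pure product/Bernstein argument can work here: when the wave input carries the dominant frequency $|\eta|\gg|\xi-\eta|$ with $|\eta|\gtrsim 1$, the output frequency is comparable to $|\eta|$ and one derivative is irretrievably lost. The missing idea is to use the oscillation in precisely that region: from \eqref{phi}, $\phi_\pm(\xi,\eta)=2(\xi-\eta)\cdot\eta+|\eta|^2\pm|\eta|$, so $|\phi_\pm|\gtrsim|\eta|^2$ there, and integrating by parts in $s$ (a normal form) gains a factor $|\eta|^{-2}$, which more than recovers the lost derivative; the boundary terms and the cubic terms where $\partial_s$ hits $\hat f$ or $\hat g_\pm$ are then estimated with the decay bounds already at your disposal. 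The complementary regions ($|\eta|\lesssim 1$, or $|\eta|\lesssim|\xi-\eta|$) are covered by the product estimates you already wrote. This normal-form step is the content of the citation to [HPS] by which the paper disposes of Proposition \ref{proenergy}, and without it your proof of the $F_\pm$ bound does not close.
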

We do not provide details on how to obtain the above bounds, since they are fairly easy to show and can be proved as in \cite{HPS}. 

We can use the a priori bounds on high Sobolev norms to reduce all of our estimates 
to frequencies smaller than $s^{\delta_N}$, where $\delta_N \ll 1$ is chosen small depending on $N$.
To see this, let us assume in what follows that at least one of the frequencies $\eta$ or $\xi-\eta$ in the expressions for $F_\pm$ and $G_\pm$,
see \eqref{F} and \eqref{G}, has size larger than $s^{2/(N-2)}$.

Observe that for all $k \geq 0$ one has ${\| P_{\geq k} v (s) \|}_{L^2} \lesssim 2^{-k l } {\| v(s) \|}_{H^l}$,
and therefore, for frequencies $2^k \gtrsim s^{ 2/(N-2) } \gtrsim 1$, we have from the apriori assumptions \eqref{apriorif}-\eqref{aprioriG1},
\begin{align}
\label{P_k}
\begin{split}
{\| P_{\geq k} u (s) \|}_{H^3} & \lesssim  2^{-k (N-2) } {\| u(s) \|}_{H^{N+1}} \lesssim \e_1 s^{-2 + \d}
\\
{\| P_{\geq k} w_\pm (s) \|}_{H^2} & \lesssim  2^{-k (N-2) } {\| w_\pm (s) \|}_{H^{N}} \lesssim \e_1 s^{-2} \, .
\end{split}
\end{align}
These estimates are already sufficient to bound all norms that do not involve weights.

To establish bounds on $L^p$ norms involving weights, we are going to apply $\nabla_\xi$ to the bilinear terms $\widehat{F}_\pm$ and $\widehat{G}_\pm$.
We start by noticing that the action of weights on Littlewood-Paley projections (on high-frequencies) is harmless,
and only gives terms that are easier to treat than any other term that we will have to deal with.
We now briefly discuss how to estimate all the other contributions that result from applying derivatives to $\widehat{F}_\pm$ and $\widehat{G}_\pm$ 
in the case when $\max\{|\eta|,|\xi-\eta|\} \gtrsim s^{2/(N-2)}$.

\subsubsection*{High frequency contributions in $F_\pm$}
Looking at \eqref{F}, we see that applying $\nabla_\xi$ twice to $\widehat{F}_\pm$ gives three types of contributions.
The first are those where $\nabla_\xi^2$ hits the phase $e^{is\phi_\pm}$: 
these terms will contain powers of $s$ but will not involve weights on the inputs $f$ and $g$.
Since at least one frequency has size larger than $s^{2/(N-2)}$, then all these terms can be estimated directly using \eqref{P_k}.
The second type of terms that arise are those with $\nabla_\xi^2$ hitting the input $\widehat{f}(\xi-\eta)$.
If this happens, the same estimates that we are going to perform  below in section \ref{secF} will work, regardless of the size of frequencies.
The third type of contribution is where one $\nabla_\xi$ hits the phase $e^{is\phi_\pm}$, and the other hits the input $\widehat{f}(\xi-\eta)$.
In this case, if $\eta$ is the largest frequency, the term can be estimated directly using the second bound in \eqref{P_k}.
If instead $\xi-\eta$ is the largest frequency, one can write $\nabla_\xi \widehat{f}(\xi-\eta) = \nabla_\eta \widehat{f}(\xi-\eta)$
and integrate by parts in $\eta$. 
This will generate one term with losses in powers of $s$ similar to the first type of term discussed above, 
plus a term where the inputs are $\widehat{f}(\xi-\eta)$ and $\nabla_\eta \widehat{g}_\pm (\eta)$. 
Since we are assuming that $|\xi-\eta| \gtrsim s^{ 2/(N-2)}$, we can use Sobolev's embedding, the second apriori assumption in \eqref{aprioriG3},
and the first inequality in \eqref{P_k}, to estimate directly this term 
and obtain the desired bound without resorting to further manipulations.

\subsubsection*{High frequency contributions in $G_\pm$}
We now briefly describe how to obtain the bound \eqref{aprioriG2} and the second bound in \eqref{aprioriG3}
for $G_\pm$, in the case of high frequencies.
Since the inputs in \eqref{G} are symmetric, up to complex conjugation (which leaves our norms invariant),
we can assume that $|\eta| \gtrsim |\xi-\eta|$ and $|\eta| \gtrsim s^{ 2/(N-2)}$.
When applying derivatives to $\widehat{G}_\pm$ we then obtain two types of contributions, similar to the ones discussed in the previous paragraph.
The first contribution is the one where $\nabla_\xi$ hits the phase $e^{is\psi_\pm}$. 
This will cause a loss of a power of $s$ which can be overcome directly using the decay given by \eqref{P_k}.
The second type of term will contain $\nabla_\xi \widehat{f}(\xi-\eta)$ as an input.
Since we have already reduced ourselves to the case when $\eta$ is the largest frequency, we can again use \eqref{P_k},
and the apriori bound \eqref{apriorif} on $xf$, to get the desired estimates in a straightforward fashion.

The above discussion shows that in estimating weighted norms of the bilinear terms $F_\pm$ and $G_\pm$ in \eqref{F} and \eqref{G},
we can always reduce our analysis to frequencies $|\xi-\eta| , |\eta| \lesssim s^{2/(N-2)}$,
for otherwise all the desired bounds can be shown to hold true without too much effort. 
We therefore agree on the following:

\begin{convention}\label{convention1}
In the rest of the paper, we assume that all frequencies, $\xi-\eta$ and $\eta$, appearing in the 
estimates of the bilinear terms \eqref{F} and \eqref{G}, have size bounded above by $s^{\delta_N}$, where  $\delta_N := \frac{2}{N-2}$
and the integer $N \gg 1$ is determined in the course of our proof by several upperbounds on $\d_N$.
In particular, expressions such as $|\xi|$ or $\nabla_\xi \psi_\pm (\xi,\eta)$ will be constantly replaced by a factor of $s^{\delta_N}$.
\end{convention}

We will also adopt the following additional notational convention:
\begin{convention}
To make notations lighter, we will often drop the $\pm$ indices, and omit the dependence on the time $t$.
Moreover, in the estimates of the bilinear terms $F$ and $G$ in \eqref{F} and \eqref{G},
we will often only consider the contribution of the integrals from $1$ to $\infty$.
All of the contributions coming from integrating between $0$ and $1$ are bounded in a straightforward fashion
by Sobolev's embedding, and our control of high Sobolev norms of the solution $(u,w)$.
\end{convention}

\section{Estimates for $G$}\label{secG}
We recall that we have the following a priori assumptions on $f=e^{it\Delta}u$:
\begin{equation} \label{eqn: f}
 {\| xf \|}_{L^2}\leq \e_1 t^\delta \quad , \quad {\|x^2f\|}_{L^2} \leq \e_1 t^{1-2\alpha-\delta} 
  \quad , \quad {\|f\|}_{H^N} t^{\delta} \leq \e_1 \, .
\end{equation}
As a consequence, the following dispersive bounds for $u$ hold:
\begin{equation} \label{eqn: u}
{\|u \|}_{L^{\infty}} \lesssim \e_1 t^{-1-\alpha} \quad, \quad {\|u\|}_{L^6} \lesssim \e_1 t^{-1+\delta} \, . 
\end{equation}

\subsection{Weighted estimates for $G$}\label{secGL2}
In this section we are going to prove the following:
\begin{proposition}
Let $G$ be the bilinear term defined in \eqref{G}:
\begin{equation}
 \label{eqn: G}
G = \FF^{-1} \int_1^t \int_{\R^3}|\xi|^{\gamma} e^{is \psi(\xi,\eta)}\hat f(\xi-\eta,s) \overline{\hat{f}} (\eta,s) \,\mathrm{d}\eta \mathrm{d}s \, .
\end{equation}
Under the a priori assumptions \eqref{eqn: f} and \eqref{eqn: u}, we have
\begin{align*}
& {\| \Lambda^{1/2}x G \|}_{L^2} \lesssim \e_1^2 \, ,
\\
& {\| e^{it\Lambda}x\Lambda G \|}_{L^{4/(1+\gamma)}} \lesssim t^{-1/4 + 3\gamma/4 - 2\alpha - 3\delta} \e_1^2 \, ,
\\
& {\| e^{it\Lambda}\Lambda^{-1} G \|}_{L^3} \lesssim t^{- 2\alpha - 3\delta} \e_1^2 \, .
\end{align*}
\end{proposition}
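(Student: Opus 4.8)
The plan is to estimate the three quantities by exploiting the identity \eqref{symg0}, which says $|\xi| = \tfrac12 \tfrac{\xi}{|\xi|}\cdot\nabla_\eta\psi_\pm(\xi,\eta)$, to integrate by parts in $\eta$ in the formula \eqref{eqn: G}. By Convention \ref{convention1} we may assume all frequencies are $\lesssim s^{\d_N}$, so every appearance of $|\xi|^\g$ or $\nabla_\eta\psi_\pm$ costs at most $s^{\d_N}$. The key structural point is that the inner integrand in $G$ is $|\xi|^\g e^{is\psi}\widehat f(\xi-\eta)\overline{\widehat f}(\eta)$, and since $\nabla_\eta\left(e^{is\psi}\right) = is\nabla_\eta\psi\, e^{is\psi}$, writing one factor of $|\xi|$ as $\tfrac12\tfrac{\xi}{|\xi|}\cdot\nabla_\eta\psi$ lets us trade a power of $|\xi|$ for a gain of $s^{-1}$ at the cost of moving $\nabla_\eta$ onto the profiles (producing $x\widehat f$ factors, controlled by \eqref{eqn: f}) or onto the harmless low-frequency symbol. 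This is the mechanism that converts the weak resonant structure into decay.

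First I would treat ${\|\Lambda^{1/2}xG\|}_{L^2}$. Computing $xG$ in Fourier space means applying $\nabla_\xi$ to $\widehat G$; when $\nabla_\xi$ hits $e^{is\psi}$ we get a factor $s\nabla_\xi\psi_\pm$, which by \eqref{psi} equals $s(\mp\tfrac{\xi}{|\xi|} - 2\xi + 2\eta)$ — note $|\nabla_\xi\psi|\lesssim 1 + s^{\d_N}$ under the frequency restriction, so this term alone would lose a power of $s$. To recover it, use \eqref{symg0}: we have the extra factor $|\xi|^\g$ sitting in $G$, and $s|\xi|^\g \nabla_\xi\psi = s|\xi|^{\g} \cdot(\text{bounded}) $, and we integrate by parts in $\eta$ using $|\xi| = \tfrac12\tfrac{\xi}{|\xi|}\cdot\nabla_\eta\psi$ to replace $s|\xi|$ by a derivative hitting the profiles. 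After the integration by parts we are left with schematic terms of the form $\int_1^t \int |\xi|^{\g-1}s^{\d_N}\, \widehat{xf}(\xi-\eta)\,\overline{\widehat f}(\eta)$ (and symmetric), plus a boundary-in-$s$ term. We then estimate in $L^2$ by putting one profile in $L^\infty_x$ via the dispersive bound \eqref{eqn: u} ($\|u\|_{L^\infty}\lesssim \e_1 s^{-1-\a}$) and the other weighted factor $xf$ in $L^2$ via \eqref{eqn: f} (bound $\e_1 s^\d$); the $\Lambda^{1/2}$ and the low-frequency factor $|\xi|^{\g-1}s^{\d_N}$ are absorbed since frequencies are $\lesssim s^{\d_N}$, and the resulting $s$-integrand is $\lesssim \e_1^2 s^{-1-\a+\d+O(\d_N)}$, which is integrable and gives a uniform-in-$t$ bound $\lesssim\e_1^2$. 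Terms where $\nabla_\xi$ hits $|\xi|^\g$ or the profile $\widehat f(\xi-\eta)$ are easier and handled by the same Hölder pairing.

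For the two dispersive estimates, the idea is to not estimate $x\Lambda G$ or $\Lambda^{-1}G$ directly but rather $e^{\mp it\Lambda}$ applied to them, using the linear wave estimate \eqref{linearwave}: $\|e^{it\Lambda}h\|_{L^p}\lesssim t^{-(1-2/p)}\|\Lambda^{2(1-2/p)}h\|_{L^{p'}}$. For $p = 4/(1+\g)$ we have $1-2/p = (1-\g)/2$ and $2(1-2/p) = 1-\g$, so it suffices to bound $\|\Lambda^{1-\g}(x\Lambda G)\|_{L^{p'}}$ with $p' = 4/(3-\g)$; the net power of $\Lambda$ is $1 + (1-\g) = 2-\g$ on $xG$, but one keeps $|\xi|^\g$ from $G$ itself, so effectively $\Lambda^2 (xG)$ appears with a low-frequency symbol, again harmless under Convention \ref{convention1}. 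One then repeats the integration-by-parts-via-\eqref{symg0} computation, but now estimates the bilinear expression in $L^{p'}_x$ by Hölder, using the $L^6$ dispersive bound $\|u\|_{L^6}\lesssim \e_1 s^{-1+\d}$ from \eqref{eqn: u} for one factor and a weighted $L^2$ or $L^2$-based bound for the other (for instance $\|xf\|_{L^2}\lesssim \e_1 s^\d$, or $\|f\|_{L^2}\lesssim\e_1$), choosing the Hölder split so the $x$-space exponents match $p'$; after multiplying by the $t^{-(1-\g)/2}$ prefactor from \eqref{linearwave} and integrating the $s$-integrand one lands on the claimed exponent $-1/4 + 3\g/4 - 2\a - 3\d$. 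The $L^3$ estimate for $e^{\mp it\Lambda}\Lambda^{-1}G$ is the same scheme with $p=3$: $1-2/p = 1/3$, $2(1-2/p)=2/3$, so $\Lambda^{2/3}\Lambda^{-1}G = \Lambda^{-1/3}G$ is needed in $L^{3/2}$, the factor $\Lambda^{-1/3}$ combining with $|\xi|^\g$ to give something low-frequency-bounded; Hölder with $\|u\|_{L^6}$ and $\|u\|_{L^6}$ (so $L^6\cdot L^6 \hookrightarrow L^3$, then interpolate down to $L^{3/2}$ using the weighted $L^2$ control) produces the $s$-integrand whose time integral yields $t^{-2\a-3\d}$.

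The main obstacle I anticipate is \emph{bookkeeping of the frequency powers and the $L^p$ Hölder exponents simultaneously}: the $|\xi|^\g$ weight, the $\Lambda$-powers demanded by \eqref{linearwave}, the $s^{\d_N}$ losses from Convention \ref{convention1}, and the choice of which profile goes in which Lebesgue space must all be balanced so that (i) every low-frequency symbol is genuinely bounded (this is where $\g>0$ is used, and where $\g<1/3$ forces the stronger restriction $\a\le\g/2-10\d$ in \eqref{eqn: alpha gamma}), and (ii) the final $s$-integrand has exponent $<-1$ when a uniform bound is claimed, or integrates to exactly the stated power of $t$ otherwise. The integration by parts in $\eta$ using \eqref{symg0} is the one genuinely clever step; everything after it is Hölder plus the a priori bounds \eqref{eqn: f}, \eqref{eqn: u} and the linear wave dispersive estimate \eqref{linearwave}, but getting the exponents to line up with \eqref{eqn: alpha gamma} is where the care lies.
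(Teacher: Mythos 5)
Your overall scheme for the first two bounds—use \eqref{symg0} to trade the factor $|\xi|$ for an integration by parts in $\eta$, then close with H\"older, the a priori bounds \eqref{eqn: f}--\eqref{eqn: u} and the wave dispersive estimate \eqref{linearwave}—is indeed the paper's mechanism. But there is a concrete gap in how you dispose of the leftover symbols. After the integration by parts the surviving multiplier carries a \emph{negative} power of $|\xi|$ whenever $\g$ is small: $|\xi|^{\g-1/2}$ in the $\Lambda^{1/2}xG$ estimate, and $|\xi|^{\g-1/3}$ in the $\Lambda^{-1}G$ estimate (where in fact no integration by parts is needed at all: one just writes $e^{it\Lambda}\Lambda^{-1}G=\int_1^t e^{i(t-s)\Lambda}\Lambda^{-1+\g}|u|^2\,\mathrm{d}s$ and applies \eqref{linearwave}). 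Your claim that these factors are ``absorbed since frequencies are $\lesssim s^{\d_N}$'' is not valid: Convention \ref{convention1} truncates frequencies from \emph{above}, so it controls positive powers of $|\xi|$ only; $|\xi|^{\g-1/2}$ and $|\xi|^{\g-1/3}$ are unbounded at low frequencies precisely in the range $\g<1/2$, resp.\ $\g<1/3$, which is the interesting part of the parameter range. Likewise the assertion in your final paragraph that ``every low-frequency symbol is genuinely bounded'' is exactly what fails there, so your $L^\infty$--$L^2$ pairing with integrand $s^{-1-\a+\d+O(\d_N)}$ only covers $\g\geq 1/2$ for the first bound, and your ``$L^6\cdot L^6\hookrightarrow L^3$, then interpolate down to $L^{3/2}$'' step does not address the singular factor $\Lambda^{-1/3+\g}$ for $\g<1/3$.

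The missing ingredient is fractional integration: one must use the negative-order Sobolev embedding to convert the singular symbol into a lower Lebesgue exponent, e.g.
\begin{equation*}
{\|\Lambda^{-1/2+\g}h\|}_{L^2}\lesssim {\|h\|}_{L^{3/(2-\g)}}\,,\qquad
{\|\Lambda^{-1/3+\g}h\|}_{L^{3/2}}\lesssim {\|h\|}_{L^{9/(7-3\g)}}\,,
\end{equation*}
and then pay for the resulting intermediate exponents with \emph{interpolated} dispersive decay of $u$ (e.g.\ ${\|u\|}_{L^{6/(1-2\g)}}$ or ${\|u\|}_{L^{18/(7-3\g)}}$, interpolating the $L^6$ and $L^\infty$ bounds in \eqref{eqn: u}). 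This is precisely where the condition $\a\leq\g/2-10\d$ in \eqref{eqn: alpha} is consumed: for $0<\g<1/3$ the resulting time decay (e.g.\ $t^{-1/4-\g/4+4\d}$ for the $L^{4/(1+\g)}$ bound, $t^{-\g+2\d}$ for the $L^3$ bound) is weaker than the naive target and only suffices because $\a$ is tied to $\g$. Without spelling out this case split ($\g\gtrless1/2$, resp.\ $\g\gtrless1/3$) and the Sobolev-plus-interpolation step, the proof does not close for small $\g$, which is the whole point of the proposition.
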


The proof of the above proposition is split into Lemma \ref{prop: G1}, \ref{prop: G2} and \ref{prop: G3} below.
We recall the following assumptions on the relative sizes of $\gamma$ and $\alpha$ from \eqref{eqn: alpha gamma},
 \begin{equation} \label{eqn: alpha}
  3\delta < \alpha \leq \gamma/2 - 10\delta \quad , \quad \alpha \leq 1/6 - 10\delta.
 \end{equation}

\begin{lemma}
\label{prop: G1}
Let $G$ be the bilinear term defined in \eqref{eqn: G} and let $\alpha$ satisfy \eqref{eqn: alpha}. Then, 
\begin{equation*}
  {\| \Lambda^{1/2}x G \|}_{L^2} \lesssim \e_1^2 \, .
\end{equation*}
\end{lemma}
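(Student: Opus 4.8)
The plan is to compute $\widehat{\Lambda^{1/2} x G}(\xi,t)$ by applying $\nabla_\xi$ to $\widehat{G}(\xi,t) = \int_1^t \int_{\R^3} |\xi|^\gamma e^{is\psi(\xi,\eta)}\hat f(\xi-\eta,s)\overline{\hat f}(\eta,s)\,\mathrm{d}\eta\,\mathrm{d}s$, and to track the three types of terms that arise: (i) $\nabla_\xi$ hits $|\xi|^\gamma$, producing a factor $\gamma |\xi|^{\gamma-2}\xi$, which after multiplication by $\Lambda^{1/2}$ gives a symbol of size $|\xi|^{\gamma-1/2}\lesssim s^{(\gamma-1/2)\delta_N}$ and is harmless by Convention \ref{convention1}; (ii) $\nabla_\xi$ hits $\hat f(\xi-\eta,s)$, which we rewrite as $\nabla_\eta \hat f(\xi-\eta,s)$, and this yields a term with the input $xf$ in place of $f$ — directly controlled by the a priori bound ${\|xf\|}_{L^2}\leq \e_1 s^\delta$ together with the $L^\infty$ decay ${\|u\|}_{L^\infty}\lesssim \e_1 s^{-1-\alpha}$; and (iii) the main term, where $\nabla_\xi$ hits $e^{is\psi}$ and brings down $is\nabla_\xi\psi$.

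For term (iii) I would use the identity \eqref{symg0}, which says $|\xi| = \frac12 \frac{\xi}{|\xi|}\cdot\nabla_\eta\psi_\pm(\xi,\eta)$, to replace the factor of $s$ (coming from $\nabla_\xi\psi = \mp \xi/|\xi| + 2\eta$, or more precisely its relevant component) by $\frac{s}{|\xi|}$ times a directional $\eta$-derivative of $\psi$, and then integrate by parts in $\eta$. Writing $i s e^{is\psi} (\text{dir. }\nabla_\eta \psi) = (\text{dir. }\nabla_\eta)(e^{is\psi})$, the integration by parts moves the $\eta$-derivative onto the product $\hat f(\xi-\eta,s)\overline{\hat f}(\eta,s)$ and onto the combined symbol; since $|\xi|^\gamma/|\xi| = |\xi|^{\gamma-1}$, the net symbol after also multiplying by $\Lambda^{1/2}$ is $|\xi|^{\gamma-1/2}$, again of size $\lesssim s^{(\gamma-1/2)\delta_N}$ by the convention, hence negligible. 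The two resulting pieces have inputs either $(xf, \overline{f})$ type or $(f, \overline{xf})$ type (one $\eta$-derivative producing an $x$-weight on one of the two profiles), plus a genuinely lower-order term where the $\eta$-derivative hits the cutoff/symbol. Each of these I would estimate by putting the weighted factor in $L^2$ via ${\|xf\|}_{L^2}\leq\e_1 s^\delta$ and the other factor in $L^\infty$ via ${\|u\|}_{L^\infty}\lesssim \e_1 s^{-1-\alpha}$, so that after applying the convolution structure and Plancherel, the $s$-integrand is bounded by $\e_1^2 s^{\delta} s^{-1-\alpha} s^{C\delta_N}$; choosing $N$ large enough that $C\delta_N \ll \alpha$ (permissible since $\delta_N = 2/(N-2)$) and using $\alpha > 3\delta$ from \eqref{eqn: alpha}, the exponent $-1-\alpha+\delta+C\delta_N < -1$, so the time integral over $[1,t]$ converges uniformly in $t$, giving the claimed bound $\lesssim \e_1^2$.

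I expect the main obstacle to be bookkeeping the low-frequency behavior of the symbols $|\xi|^{\gamma-2}$, $|\xi|^{\gamma-1}$ and the extra $\Lambda^{1/2}$: one must make sure that after all integrations by parts and the extra half-derivative, the resulting multiplier is of the form $|\xi|^{a}$ with $a > -3/2$ (so it is locally $L^2$-bounded against the profiles, or at least handled by Bernstein together with the frequency cutoff $|\xi|\lesssim s^{\delta_N}$), and that no negative power of $|\eta|$ appears that cannot be absorbed — here the convention $|\eta|\lesssim s^{\delta_N}$ and the smoothness/decay of $\hat f$ (from ${\|f\|}_{H^N}$ and the weighted bounds) are what save us. The secondary technical point is that $\nabla_\xi \hat f(\xi-\eta,s) = \nabla_\eta \hat f(\xi-\eta,s)$ must be used consistently so that every $\xi$-derivative is ultimately either converted to an $\eta$-derivative (and integrated by parts) or lands on $|\xi|^\gamma$; once that is organized, each remaining term is of the schematic form $\Lambda^{1/2}$ applied to a pseudo-product of $f$-type and $\overline{f}$-type inputs with at most one $x$-weight total, and the estimate closes by the $L^2\times L^\infty$ Hölder argument described above.
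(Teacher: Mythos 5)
Your overall strategy coincides with the paper's (apply $|\xi|^{1/2}\nabla_\xi$ to $\widehat G$, treat the term where the derivative lands on $\hat f$ by an $L^2\times L^\infty$ H\"older estimate, and for the term where it lands on the phase use the null identity \eqref{symg} to trade the factor of $s$ for an $\eta$-integration by parts), but there is a genuine gap in how you dispose of the leftover symbol. After the integration by parts the multiplier is $m_1(\xi,\eta)\,|\xi|^{\gamma-1/2}$, and for $0<\gamma<1/2$ — which is the interesting part of the lemma's range — this is a \emph{negative} power of $|\xi|$. Your claim that it is ``of size $\lesssim s^{(\gamma-1/2)\delta_N}$ by the convention, hence negligible'' is false: Convention \ref{convention1} only cuts frequencies off from \emph{above} by $s^{\delta_N}$, so on the support $|\xi|\lesssim s^{\delta_N}$ one has $|\xi|^{\gamma-1/2}\geq s^{(\gamma-1/2)\delta_N}$, i.e.\ the inequality goes the wrong way, and the symbol is singular at low frequencies where there is no cutoff at all. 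The same issue affects your term (i), where $\nabla_\xi$ hits $|\xi|^\gamma$. Consequently your claimed integrand $\e_1^2 s^{\delta}s^{-1-\alpha}s^{C\delta_N}$, obtained by pairing $\|xf\|_{L^2}$ with $\|u\|_{L^\infty}$, is only justified when $\gamma\geq 1/2$; your ``obstacle'' paragraph acknowledges that negative powers may appear, but the proposed remedy (``Bernstein together with the frequency cutoff $|\xi|\lesssim s^{\delta_N}$'') cannot work, since the problem is at low frequencies and there is no cutoff from below.

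The paper closes exactly this case by a different pairing: for $0<\gamma<1/2$ one absorbs $\Lambda^{-(1/2-\gamma)}$ by the Sobolev embedding $\|\Lambda^{-1/2+\gamma}h\|_{L^2}\lesssim \|h\|_{L^{3/(2-\gamma)}}$, then applies H\"older with $e^{is\Delta}(xf)\in L^2$ and $u=e^{is\Delta}f\in L^{6/(1-2\gamma)}$, and interpolates the latter between the $L^6$ bound $\lesssim s^{-1+\delta}$ and the $L^\infty$ bound $\lesssim s^{-1-\alpha}$ from \eqref{eqn: u}. The resulting decay is strictly weaker than $s^{-1-\alpha}$, and time-integrability is not automatic: it uses the structural relation $\alpha\leq \gamma/2-10\delta$ in \eqref{eqn: alpha} and the freedom to take $\delta$, $\delta_N$ small \emph{depending on $\gamma$}, so that the gain $\sim 2\gamma\alpha$ beats the losses $\sim \delta+\delta_N$. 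This is the step your proposal is missing; without it the estimate for $\Lambda^{1/2}xG$ does not close for small $\gamma$, which is precisely the regime the paper is designed to handle.
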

To prove this, it is crucial to notice that low frequencies play the role of a special null resonant structure in the nonlinear term $G$,
see \eqref{symg0},
\begin{equation}
\label{symg}
|\xi|=\frac{1}{2}\frac{\xi}{|\xi|} \cdot \nabla_\eta \psi \, .
\end{equation}

\begin{proof}[Proof of Lemma \ref{prop: G1}]
Applying $|\xi|^{1/2}\nabla_\xi$ to $\hat{G}$ gives the terms:
\begin{align}
\label{xi^1/2dxig1}
& \int_1^t \int_{\R^3} e^{is \psi (\xi,\eta)} |\xi|^{1/2+\gamma} \nabla_\xi \hat{f}(\xi-\eta,s) \overline{\hat{f}} (\eta,s) \mathrm{d}\eta \mathrm{d}s
\\
\label{xi^1/2dxig2}
& \int_1^t \int_{\R^3} s \nabla_\xi \psi  e^{i s \psi (\xi,\eta)} |\xi|^{1/2+\gamma} \hat{f}(\xi-\eta,s) \overline{\hat{f}} (\eta,s) \mathrm{d}\eta \mathrm{d}s \, ,
\end{align}
plus an easier term when $\nabla_\xi$ hits the symbol $|\xi|^{\gamma}$.
\eqref{xi^1/2dxig1} is easily estimated by H\"{o}lder's inequality together with the estimates on $f$ and $u$ in \eqref{eqn: f} and \eqref{eqn: u},
and using the first condition in \eqref{eqn: alpha}:
\begin{align*}
{\| \eqref{xi^1/2dxig1} \|}_{L^2} & \lesssim \int_1^t s^{2\delta_N} {\|  x f \|}_{L^2} {\|  e^{is\Delta} f  \|}_{L^\infty} \, \mathrm{d}s
\lesssim \int_1^t s^{2\delta_N} s^\d \frac{1}{s^{1+\a}} \, ds \lesssim 1 \, .
\end{align*}

Using the identity \eqref{symg}
and integrating by parts in $\eta$, \eqref{xi^1/2dxig2} gives terms of the form
\begin{align*}
 \int_1^t \int_{\R^3} e^{i s\psi (\xi,\eta)} m_1(\xi,\eta)|\xi|^{-1/2+\gamma} \nabla_\eta \hat{f}(\xi-\eta,s) \overline{\hat{f}} (\eta,s) \mathrm{d}\eta ds,
\end{align*}
together with symmetric or easier terms. Here $m_1(\xi,\eta)$ is a symbol satisfying homogeneous bounds of order 1 for large frequencies, and is otherwise harmless. By Plancherel, the $L^2$-norm of this term is bounded by
\[ \int_1^t s^{\delta_N} \|\Lambda^{-1/2+\gamma} e^{is\Delta}(xf)e^{is\Delta}f\|_{L^2} \, \mathrm{d}s. \]
For $1/2\leq\gamma<1$, we can estimate this by,
\begin{align*}
\int_1^t s^{3\delta_N/2} \|e^{is\Delta}(xf)e^{is\Delta}f\|_{L^2} \, ds \lesssim  \int_1^t s^{3\delta_N/2} \|xf\|_{L^2} \|e^{is\Delta}f\|_{L^\infty} \, \mathrm{d}s \\
 \lesssim \int_1^t  s^{3\delta_N/2+\delta}s^{-1-\alpha} \, \mathrm{d}s \lesssim 1,
\end{align*}
since $\alpha>3\delta$.
For $0\leq \gamma<1/2$, applying Sobolev embedding, we can estimate this by
\begin{align*}
\int_1^t s^{\delta_N} \| e^{is\Delta}(xf)e^{is\Delta}f\|_{L^{3/(2-\gamma)}} \, \mathrm{d}s \lesssim \int_1^t s^{\delta_N} \|xf\|_{L^2}\|e^{is\Delta}f\|_{L^{6/(1-2\gamma)}} \, \mathrm{d}s\\
 \lesssim \int_1^t s^{\delta_N} s^\delta \|e^{is\Delta}f\|_{L^{6/(1-2\gamma)}} \, \mathrm{d}s.
 \end{align*}
Interpolating between the $L^6$ and $L^\infty$ estimates on $u = e^{it\Delta}f$ from \eqref{eqn: u}, 
and choosing $\delta>0$ sufficiently small, we can ensure that this integral has an $O(1)$ bound.
\end{proof}

\begin{lemma}
\label{prop: G2}
Let $G$ be the bilinear term defined in \eqref{eqn: G} and let $\alpha$ satisfy \eqref{eqn: alpha}. Then, 
\begin{equation*}
  {\| e^{it\Lambda}x\Lambda G \|}_{L^{4/(1+\gamma)}} \lesssim t^{-1/4 + 3\gamma/4 - 2\alpha - 3\delta} \e_1^2 \, .
\end{equation*}
\end{lemma}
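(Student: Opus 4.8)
The plan is to follow the same scheme as in the proof of Lemma \ref{prop: G1}: apply $x\Lambda$ to $\hat G$ in Fourier space, which means applying $|\xi| \nabla_\xi$ to the integrand of \eqref{eqn: G}, and then dispose of the resulting terms one by one. The derivative can land on the phase $e^{is\psi}$, on the input $\hat f(\xi-\eta,s)$, or on the symbol $|\xi|^\gamma$; the last is harmless (as in Lemma \ref{prop: G1}), and the term where $\nabla_\xi$ hits $\hat f(\xi-\eta)$ is estimated directly, since $\nabla_\xi \hat f(\xi-\eta) = \nabla_\eta \hat f(\xi-\eta) = \widehat{xf}(\xi-\eta)$ up to signs: using \eqref{linearwave} for the wave group to convert the $e^{it\Lambda}$ in front into $t$-decay, together with H\"older and the a priori bounds \eqref{eqn: f}--\eqref{eqn: u} on $xf$ and on $e^{is\Delta}f$ in $L^6 \cap L^\infty$, one obtains an $O(t^{-1/4+3\gamma/4-2\alpha-3\delta})$ bound after integrating in $s$ (here Convention \ref{convention1} lets us replace $|\xi|$ by $s^{\delta_N}$, which costs only $\delta$-powers since $\delta_N \ll \delta$).

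The main term is the one where $\nabla_\xi$ hits the phase, producing a factor $s\,\nabla_\xi\psi \cdot |\xi|^{1+\gamma}$. This is exactly where the null structure \eqref{symg} is used: $\nabla_\xi\psi$ is not small, but $|\xi|$ \emph{is} small at low frequencies, and \eqref{symg} says $|\xi| = \frac12 \frac{\xi}{|\xi|}\cdot\nabla_\eta\psi$, so I can trade one power of $|\xi|$ for a derivative $\nabla_\eta$ and then integrate by parts in $\eta$. The boundary-free integration by parts moves the $\nabla_\eta$ onto $e^{is\psi}$ (giving back $s\nabla_\eta\psi$, hence another factor of $|\xi|$ — but this is the resonant gain, converting $s\cdot s = s^2$ losses into manageable ones after a second use of \eqref{symg}, OR one stops after one integration by parts), onto $\hat f(\xi-\eta)$ (producing $\widehat{xf}(\xi-\eta)$), or onto $\overline{\hat f}(\eta)$ (producing $\overline{\widehat{xf}}(\eta)$). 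After the dust settles one is left with expressions of the schematic form
\begin{align*}
\int_1^t s\,|\xi|^{\gamma}\, m(\xi,\eta)\, e^{is\psi}\,\widehat{(xf)}(\xi-\eta,s)\,\overline{\hat f}(\eta,s)\,\mathrm d\eta\,\mathrm ds
\end{align*}
and its symmetric counterpart, with $m$ a symbol that is harmless at low frequency (homogeneous of controlled order at high frequency, which Convention \ref{convention1} again renders a mere $s^{\delta_N}$ loss). One then estimates the $L^{4/(1+\gamma)}$ norm: distributing $|\xi|^\gamma = \Lambda^\gamma$ and using the wave dispersive estimate \eqref{linearwave} with $p = 4/(1+\gamma)$ (so $p' = 4/(3-\gamma)$ and $2(1-2/p) = (1-\gamma)$, matching the negative power $t^{-(1-\gamma)/2}$ one wants), the product $e^{is\Delta}(xf)\cdot e^{is\Delta}f$ is handled by H\"older, placing $xf$ in $L^2$ (bound $\e_1 s^\delta$) and $e^{is\Delta}f$ in an interpolated $L^q$ space between the $L^6$ bound $\e_1 s^{-1+\delta}$ and the $L^\infty$ bound $\e_1 s^{-1-\alpha}$ from \eqref{eqn: u}. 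The $s$-integral then converges (or grows only like the stated power) precisely because of the constraints $\alpha \le \gamma/2 - 10\delta$ and $\alpha \le 1/6 - 10\delta$ in \eqref{eqn: alpha}; the exponent bookkeeping — one power of $s$ up front, $s^{-1+\delta}$-ish decay from $e^{is\Delta}f$, the $s^\delta$ from $xf$, the $t^{-(1-\gamma)/2}$ from the wave group pulled outside the time integral, and the $s^{\delta_N}$ symbol losses — is what produces the claimed exponent $-1/4 + 3\gamma/4 - 2\alpha - 3\delta$.

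I expect the main obstacle to be the exponent bookkeeping in the critical case $0 < \gamma < 1/2$ (equivalently, small $\gamma$), where the resonant gain from \eqref{symg} is weak and one cannot afford to integrate by parts in $\eta$ a second time without either regaining the bad factor of $|\xi|$ or losing too much in $s$. There the interpolation exponent for $e^{is\Delta}f$ must be chosen carefully — close to the $L^6$ endpoint when $\gamma$ is small — and one must verify that the Lebesgue exponents still obey the H\"older and Sobolev-embedding constraints needed for \eqref{linearwave} to apply with the stated $p$. A secondary, more routine annoyance is keeping track of the symbols $m(\xi,\eta)$ generated by repeated integration by parts (factors like $\xi/|\xi|$ and $\eta/|\eta|$ and their $\eta$-derivatives), and checking that each is benign at low frequency so that the only cost is the $s^{\delta_N}$ permitted by Convention \ref{convention1}. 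Modulo these checks, the estimate closes exactly as in the $\gamma = 1$ analysis of \cite{HPS}, with the wave dispersive estimate \eqref{linearwave} doing the work that a weighted $L^2$ bound on $x^2 G$ did there.
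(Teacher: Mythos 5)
Your overall scheme is the paper's: apply $|\xi|\nabla_\xi$ to $\widehat G$, estimate the term where the derivative falls on $\widehat f(\xi-\eta)$ directly, treat the phase term by trading one power of $|\xi|$ for $\nabla_\eta\psi$ via \eqref{symg} and integrating by parts in $\eta$, and conclude with the wave dispersive estimate \eqref{linearwave} at $p=4/(1+\gamma)$ together with H\"older ($xf$ in $L^2$, $e^{is\Delta}f$ in $L^{4/(1-\gamma)}$ by interpolation). However, the execution of the central step is recorded incorrectly, in a way that would break the estimate. After using \eqref{symg} one has $s|\xi|e^{is\psi}=\tfrac{s}{2}\tfrac{\xi}{|\xi|}\cdot\nabla_\eta\psi\, e^{is\psi}=\tfrac{1}{2i}\tfrac{\xi}{|\xi|}\cdot\nabla_\eta e^{is\psi}$, and the integration by parts moves $\nabla_\eta$ \emph{off} the exponential onto $\widehat f(\xi-\eta)$, $\overline{\widehat f}(\eta)$ and the symbol; in particular the explicit factor of $s$ is consumed, and the resulting terms (the paper's \eqref{dxixig3}, of the same shape as \eqref{dxixig1}) carry no power of $s$. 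Your parenthetical about $\nabla_\eta$ landing back on $e^{is\psi}$ has the direction of the integration by parts backwards, and, more importantly, your schematic post-integration term and your exponent count keep ``one power of $s$ up front.''

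With that spurious factor the argument cannot close: H\"older plus interpolation give $\|e^{is\Delta}(xf)\,e^{is\Delta}f\|_{L^{4/(3-\gamma)}}\lesssim \e_1^2\, s^{\delta}s^{-(3/4+3\gamma/4)(1-\delta)}$, so retaining an extra $s$ leads to
\begin{equation*}
\int_0^t \frac{s^{1+2\delta}}{(t-s)^{1/2-\gamma/2}}\, s^{-(3/4+3\gamma/4)(1-\delta)}\,\mathrm{d}s \ \sim\ t^{\,3/4-\gamma/4+O(\delta)},
\end{equation*}
which exceeds the target $t^{-1/4+3\gamma/4-2\alpha-3\delta}$ by $t^{\,1-\gamma+2\alpha+O(\delta)}$ and therefore fails for every $\gamma\in(0,1]$; no choice permitted by \eqref{eqn: alpha} can absorb this. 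With the $s$ correctly eliminated, the very computation you outline gives $t^{-1/4-\gamma/4+4\delta}$ for $0<\gamma<1/3$ and $t^{-1/2+\gamma/2+4\delta}$ for $1/3\leq\gamma<1$, which is exactly what the constraints $\alpha\leq\gamma/2-10\delta$ and $\alpha\leq 1/6-10\delta$ are designed to dominate — and that is the paper's proof. So the missing ingredient is not a new idea but the recognition that the null-structure integration by parts eliminates the time factor; as written, your key display and your bookkeeping are arithmetically inconsistent with the bound you claim.
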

\begin{proof}

Applying $\nabla_\xi|\xi|$ to $\hat{G}$ gives the terms:
\begin{align}
\label{dxixig1}
& \int_1^t \int_{\R^3} e^{is \psi (\xi,\eta)} |\xi|^{1+\gamma} \nabla_\xi \hat{f}(\xi-\eta,s) \overline{\hat{f}} (\eta,s) \mathrm{d}\eta \mathrm{d}s
\\
\label{dxixig2}
& \int_1^t \int_{\R^3}s \nabla_\xi \psi  e^{is \psi (\xi,\eta)} |\xi|^{1+\gamma} \hat{f}(\xi-\eta,s) \overline{\hat{f}} (\eta,s) \mathrm{d}\eta \mathrm{d}s \, ,
\end{align}
plus an easier term when $\nabla_\xi$ hits the symbol $|\xi|^{1+\gamma}$.

We now use \eqref{symg} and integrate by parts in $\eta$ to write \eqref{dxixig2} as terms of the form
\begin{equation} \label{dxixig3}
 \int_1^t \int_{\R^3} e^{i s\psi (\xi,\eta)} m_1(\xi,\eta) |\xi|^\gamma\nabla_\eta \hat{f}(\xi-\eta,s) \overline{\hat{f}} (\eta,s) \, \mathrm{d}\eta \mathrm{d}s,
\end{equation}
together with symmetric or easier terms. 
Here, as before, $m_1(\xi,\eta)$ is a symbol satisfying homogeneous bounds of order 1 for large frequencies, and is otherwise harmless. 

Using the linear dispersive estimate, the contribution from \eqref{dxixig1}-\eqref{dxixig2} 
can thus be bounded by
\begin{align*} 
& \int_0^t\frac{1}{(t-s)^{1/2-\gamma/2}} s^{\delta} \| \Lambda^{-\gamma}\Lambda^{\gamma}e^{is\Delta}(xf)(e^{is\Delta}f)\|_{L^{4/(3-\gamma)}} \, \mathrm{d}s 
\\
& \lesssim  \int_0^t\frac{1}{(t-s)^{1/2-\gamma/2}} s^{\delta} \|xf\|_{L^2} \|e^{is\Delta}f\|_{L^{4/(1-\gamma)}} \, \mathrm{d}s 
\\
& \lesssim \int_0^t\frac{1}{(t-s)^{1/2-\gamma/2}} s^{2\delta}s^{-(3/4+3\gamma/4)(1-\delta)} \, \mathrm{d}s.
 \end{align*}
 If $0<\gamma<1/3$, then we can bound this integral by $t^{-1/4 - \gamma/4 +4\delta}$. 
If $1/3\leq \gamma <1$, then instead, we obtain a bound of $t^{-1/2+\gamma/2+ 4\delta}$. 
By the assumptions on $\alpha$ from \eqref{eqn: alpha}, these estimates are sufficient.
\end{proof}

\begin{lemma}
\label{prop: G3}
Let $G$ be the bilinear term defined in \eqref{eqn: G} and let $\alpha$ satisfy \eqref{eqn: alpha}. Then, 
\begin{equation*}
  {\| e^{it\Lambda}\Lambda^{-1} G \|}_{L^3} \lesssim \e_1^2 t^{- 2\alpha - 3\delta} \, .
\end{equation*}
\end{lemma}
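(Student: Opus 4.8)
The plan is to estimate $e^{it\Lambda}\Lambda^{-1}G$ by the same scheme used for Lemmas \ref{prop: G1} and \ref{prop: G2}: we apply $\nabla_\xi$ to the Fourier transform $\widehat{\Lambda^{-1}G}$ (the weight $x$ does not appear here, so $\Lambda^{-1}G$ itself is already what we want, and we only need an $L^3$ bound on $e^{it\Lambda}\Lambda^{-1}G$ — so in fact no $\nabla_\xi$ is needed, and we estimate $\Lambda^{-1}G$ directly). Thus we write
\begin{align*}
\widehat{\Lambda^{-1}G}(\xi,t) = \int_1^t \int_{\R^3} |\xi|^{-1+\gamma} e^{is\psi(\xi,\eta)} \hat f(\xi-\eta,s) \overline{\hat f}(\eta,s)\,\mathrm{d}\eta\,\mathrm{d}s,
\end{align*}
and observe that the symbol $|\xi|^{-1+\gamma}$ is integrable near $\xi = 0$ in three dimensions precisely when $\gamma > -2$, so it is harmless; under Convention \ref{convention1} it is bounded by $s^{\delta_N(1-\gamma)} \lesssim s^{\delta_N}$ on the relevant frequency range (and the low-frequency part is controlled separately using the $L^{4/(1+\gamma)}$-type structure). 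Applying the linear dispersive estimate \eqref{linearwave} with $p = 3$, which costs $(t-s)^{-1/3}$ and requires controlling $\Lambda^{2/3}$ of the bilinear integrand in $L^{3/2}$, we reduce to a time integral of a product of two Schr\"odinger linear flows.

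Concretely, the plan is to bound
\begin{align*}
{\| e^{it\Lambda}\Lambda^{-1}G(t) \|}_{L^3} \lesssim \int_1^t \frac{1}{(t-s)^{1/3}} s^{\delta_N} {\| \Lambda^{-1+\gamma}\left( e^{is\Delta}\hat{}^{-1}\hat f \cdot \overline{e^{is\Delta}\hat{}^{-1}\hat f} \right) \|}_{L^{3/2}}\,\mathrm{d}s,
\end{align*}
and then to distribute the derivative: since $-1+\gamma < 0$ and $0 < \gamma \leq 1$, $\Lambda^{-1+\gamma}$ is a smoothing operator, so by Sobolev embedding $\|\Lambda^{-1+\gamma}(vw)\|_{L^{3/2}} \lesssim \|vw\|_{L^{p}}$ for $p = 1/(2/3 + (1-\gamma)/3) = 3/(3-\gamma)$ (i.e. the Sobolev-conjugate exponent), and then by H\"older $\|vw\|_{L^{3/(3-\gamma)}} \lesssim \|v\|_{L^2}\|w\|_{L^{6/(3-2\gamma)}}$ with $v = e^{is\Delta}f$. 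Here we use the weighted estimate to absorb one factor: in fact I expect one should take $v = e^{is\Delta}(xf)$-free, i.e. use $\|e^{is\Delta}f\|_{L^2} = \|f\|_{L^2} \lesssim \e_1$ for the first factor and interpolate the second factor between the $L^6$ bound $\lesssim \e_1 t^{-1+\delta}$ and the $L^\infty$ bound $\lesssim \e_1 t^{-1-\alpha}$ from \eqref{eqn: u}. Interpolating, $\|e^{is\Delta}f\|_{L^{6/(3-2\gamma)}} \lesssim \e_1 s^{-1+\theta\delta-(1-\theta)\alpha}$ for the appropriate $\theta = \theta(\gamma) \in [0,1]$, which gives a total integrand $\lesssim \e_1^2 s^{\delta_N - 1 + \theta\delta - (1-\theta)\alpha}$ and, after the $(t-s)^{-1/3}$ convolution, a bound of the form $\e_1^2 t^{-1/3 + \delta_N + \theta\delta - (1-\theta)\alpha}$ — which one checks is $\leq \e_1^2 t^{-2\alpha-3\delta}$ using \eqref{eqn: alpha} (the constraint $\alpha \leq 1/6 - 10\delta$ is exactly what makes $1/3 - (\text{something}) \geq 2\alpha + 3\delta$ work, with room to spare for the $\delta_N$ and $\delta$ losses since $\delta_N \ll \delta$).

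The main obstacle I anticipate is the low-frequency behavior of the symbol $|\xi|^{-1+\gamma}$ when $\gamma$ is small: although it is locally integrable, it is \emph{singular} at $\xi = 0$, and one must be careful that after applying the dispersive estimate \eqref{linearwave} — which also involves a factor $\Lambda^{2/3}$ — the combined symbol $|\xi|^{-1+\gamma+2/3} = |\xi|^{-1/3+\gamma}$ is still handled correctly (it is bounded near $0$ iff $\gamma \geq 1/3$, and only mildly singular otherwise). When $0 < \gamma < 1/3$ one should split into low frequencies $|\xi| \lesssim 1$, where $\Lambda^{-1/3+\gamma}$ is controlled by a negative Sobolev / Hardy–Littlewood–Sobolev estimate shifting $L^{3/2}$ to a larger Lebesgue exponent, and high frequencies $1 \lesssim |\xi| \lesssim s^{\delta_N}$, where it is bounded by $s^{\delta_N}$ as in Convention \ref{convention1}. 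A secondary, more routine issue is tracking the interpolation exponent $\theta(\gamma)$ and verifying the final arithmetic of the time exponents against \eqref{eqn: alpha} uniformly in $\gamma \in (0,1]$; I expect this to go through exactly as in Lemma \ref{prop: G2}, possibly with a case split at $\gamma = 1/3$, and with the contribution of the integral from $0$ to $1$ dispatched trivially by Sobolev embedding and control of high Sobolev norms as per the conventions.
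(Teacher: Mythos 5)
Your concrete computation does not close, and the root of the problem is the step in which you throw away the factor $\Lambda^{2/3}$ coming from the dispersive estimate \eqref{linearwave} as a harmless $s^{O(\delta_N)}$ loss and then absorb the full $\Lambda^{-1+\gamma}$ by Sobolev embedding. First, the interpolation you invoke is not available: for $0<\gamma<1$ the exponent $6/(3-2\gamma)$ lies strictly between $2$ and $6$, so it cannot be reached by interpolating the $L^6$ and $L^\infty$ bounds of \eqref{eqn: u}; you must interpolate between $L^2$ (which has no decay, only the $s^{\delta}$ growth allowed by \eqref{eqn: f}) and $L^6$, which gives ${\|e^{is\Delta}f\|}_{L^{6/(3-2\gamma)}}\lesssim \e_1 s^{-\gamma+\delta}$, not $s^{-1+O(\delta)}$. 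Second, with the corrected decay your integrand is only $\e_1^2 s^{-\gamma+O(\delta)+\delta_N}$, and the convolution with $(t-s)^{-1/3}$ then yields a bound of order $\e_1^2\, t^{2/3-\gamma+O(\delta)}$, which is far weaker than $t^{-2\alpha-3\delta}$ (it even grows in $t$ for small $\gamma$) and fails for every $\gamma\leq 2/3$; the scheme as written could only work for $\gamma$ close to $1$. A further slip: the remark that $|\xi|^{-1+\gamma}$ is ``bounded by $s^{\delta_N(1-\gamma)}$ under Convention \ref{convention1}'' is wrong, since the convention bounds frequencies from above and hence controls positive powers of $|\xi|$, not negative ones.

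The fix is precisely the route you relegate to your ``anticipated obstacle'' paragraph, and it is what the paper does: keep the $\Lambda^{2/3}$ and combine it with the symbol, so that after \eqref{linearwave} one must bound $\Lambda^{-1/3+\gamma}|u|^2$ in $L^{3/2}$, and split at $\gamma=1/3$. For $\gamma\geq 1/3$ the power $-1/3+\gamma$ is nonnegative, so under Convention \ref{convention1} it costs only $s^{O(\delta_N)}$, and ${\||u|^2\|}_{L^{3/2}}={\|u\|}_{L^3}^2\lesssim {\|u\|}_{L^2}{\|u\|}_{L^6}\lesssim \e_1^2 s^{-1+2\delta}$, giving $t^{-1/3+2\delta}\leq t^{-2\alpha-3\delta}$ thanks to $\alpha\leq 1/6-10\delta$. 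For $0<\gamma<1/3$, apply Hardy--Littlewood--Sobolev to the mildly negative power $\Lambda^{-1/3+\gamma}$, which places $|u|^2$ in $L^{9/(7-3\gamma)}$, i.e.\ each factor in $L^{18/(7-3\gamma)}$ with $L^2$--$L^6$ interpolation decay $s^{-1/3-\gamma/2+\delta}$; the time integral then gives $t^{-\gamma+2\delta}\leq t^{-2\alpha-3\delta}$ thanks to $\alpha\leq \gamma/2-10\delta$. So the case split and the exponents you sketch there are the correct ones, but they must replace, not supplement, the main computation you propose.
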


\begin{proof}
Using \eqref{eqn: G}, we can write,
\[ e^{it\Lambda}\Lambda^{-1} G(t) = \int_1^t e^{i(t-s)\Lambda}\Lambda^{-1+\gamma}|u(s)|^2 \, \mathrm{d}s. \]
Thus, by the linear dispersive estimate,
\begin{equation} \label{e^itxi xi^-1g}
 {\| e^{it\Lambda}\Lambda^{-1} G \|}_{L^3} \lesssim \int_1^t \frac{1}{(t-s)^{1/3}} \|\Lambda^{-1/3+\gamma}G\|_{L^{3/2}} \, \mathrm{d}s.
\end{equation}
Suppose first that $\gamma\geq1/3$. Then, we can use the bounds from \eqref{eqn: f} to estimate \eqref{e^itxi xi^-1g} by
\[ \int_1^t \frac{1}{(t-s)^{1/3}}s^{\delta/N} \|u\|^2_{L^3} \, \mathrm{d}s \lesssim \int_1^t \frac{1}{(t-s)^{1/3}}s^{\delta/N}\frac{1}{s^{1-\delta}} \, \mathrm{d}s \lesssim \frac{1}{t^{1/3}}t^{2\delta}. \]
By the third assumption on $\alpha$ from \eqref{eqn: alpha}, this gives us the desired bound.
For $0<\gamma<1/3$, we first apply Sobolev embedding to estimate \eqref{e^itxi xi^-1g} by
\[  \int_1^t \frac{1}{(t-s)^{1/3}} \|u\|^2_{L^{18/(7-3\gamma)}} \, \mathrm{d}s. \]
Using the bounds from \eqref{eqn: u}, we thus obtain,
\[ \eqref{e^itxi xi^-1g} \lesssim \int_1^t \frac{1}{(t-s)^{1/3}}s^{-(2/3+\gamma)(1-\delta)} \, \mathrm{d}s \lesssim t^{-\gamma+2\delta}. \]
By the second assumption on $\alpha$ from \eqref{eqn: alpha}, this gives us the desired bound.
\end{proof}

\subsection{Decay estimate for $G$}\label{secGLinfty}
\label{secdecayw}
In this section we want to show the following:
\begin{proposition}
\label{prodecayw}
Let $G_\pm$ be the bilinear term defined in \eqref{G}.
Under the apriori assumptions \eqref{eqn: f} and \eqref{eqn: u} we have 
\begin{equation*}
{\| e^{it\Lambda} G_\pm \|}_{ \dot{B}^0_{\infty,1} } \lesssim \e_1^2 {(1+t)}^{-1} \, .
\end{equation*}
\end{proposition}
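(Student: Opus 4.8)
The plan is to obtain the pointwise decay for $e^{it\Lambda}G_\pm$ in the $\dot B^0_{\infty,1}$ norm by exploiting the improved low-frequency dispersive estimate for the wave group, together with the null structure \eqref{symg} that is available in the nonlinearity defining $G$. First I would write, as in the proof of Lemma \ref{prop: G3},
\[
e^{it\Lambda} G(t) = \int_1^t e^{i(t-s)\Lambda}\Lambda^{-1+\gamma}\big(\Lambda^{2}|u(s)|^2\big)\,\mathrm{d}s
= \int_1^t e^{i(t-s)\Lambda}\Lambda^{1+\gamma}|u(s)|^2\,\mathrm{d}s,
\]
and dyadically decompose the output frequency $|\xi|\sim 2^k$ using $P_k$. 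By Convention \ref{convention1}, only frequencies $2^k \lesssim s^{\delta_N}$ (equivalently $2^k\lesssim t^{\delta_N}$ after paying a harmless power) need to be considered, so the sum over $k$ in the $\dot B^0_{\infty,1}$ norm involves only $O(\log t)$ terms (or $O(1)$ after summing the geometric tails from high frequencies via Proposition \ref{proenergy} and \eqref{P_k}), and the main point is to bound each $\|P_k e^{it\Lambda}G\|_{L^\infty}$ with a small power of $2^k$ to spare.

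For each dyadic piece I would apply the linear dispersive estimate \eqref{linearwave0} with $p=\infty$, $r=1$, which gives $\|P_k e^{i(t-s)\Lambda} h\|_{L^\infty} \lesssim (t-s)^{-1}\,2^{2k}\|P_k h\|_{L^1}$; the crucial feature is the factor $2^{2k}$, which combines with the $\Lambda^{1+\gamma}$ in front and the $\Lambda^{2}$ regularization coming from writing $\Lambda^{-1+\gamma}\Lambda^2$, but more importantly provides positive powers of $2^k$ that allow Besov summation over low frequencies. Split the time integral at $s = t/2$. On $s \in [t/2,t]$ we lose nothing in $s$-decay and can afford to put $|u(s)|^2 \in L^1$ using $\|u\|_{L^2}^2$ (controlled by the $H^N$ a priori bound, which at the relevant low frequencies $\lesssim s^{\delta_N}$ is acceptable), and $\int_{t/2}^t (t-s)^{-1}\,\mathrm{d}s$ is only logarithmically divergent, so one should instead trade one factor $\Lambda$ onto the input and integrate by parts in $s$ (using that $\Lambda e^{i(t-s)\Lambda} = i\partial_s e^{i(t-s)\Lambda}$) to kill the endpoint $s=t$ — this is the step where \eqref{symg} should be invoked, exactly as in the weighted estimates: after integrating by parts in $s$ one differentiates $|u(s)|^2$, producing through the Duhamel/Schrödinger equation a trilinear term and a boundary term, and in the bilinear-in-$u$ boundary piece one uses the identity \eqref{symg} to integrate by parts in $\eta$ and recover the missing $s$-decay. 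On $s \in [1,t/2]$ one has $(t-s)^{-1}\sim t^{-1}$ and only needs $\int_1^{t/2}\||u(s)|^2\|_{(\text{suitable low-freq } L^1)}\,\mathrm{d}s$ to converge, which follows from interpolating the $\|u\|_{L^6}\lesssim \e_1 s^{-1+\delta}$ and $\|u\|_{L^\infty}\lesssim \e_1 s^{-1-\alpha}$ bounds of \eqref{eqn: u} against the a priori $L^2$ control of $xf$, after using Hölder to place $|u|^2$ in $L^1$ of a ball of radius $\sim s^{\delta_N}$ or, better, using the dispersive $L^p$ bounds directly.

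The main obstacle I expect is the logarithmic endpoint at $s\approx t$ in the $L^\infty$ estimate: a naive application of the $(t-s)^{-1}$ wave dispersive bound, integrated over $[t/2,t]$, produces a $\log t$ loss that is fatal for a clean $t^{-1}$ decay. Overcoming it requires carefully exploiting the oscillation: either the $s$-integration-by-parts maneuver described above (converting $\Lambda$ into $\partial_s$, thereby removing the boundary at $s=t$ at the price of differentiating the quadratic source and using \eqref{symg} on the resulting terms), or, alternatively, borrowing a fractional derivative $\Lambda^{\theta}$ ($0<\theta<1$) from the $\Lambda^{1+\gamma}$ prefactor to upgrade the dispersive decay to $(t-s)^{-1-\theta/(\text{something})}$ at the cost of $2^{k\theta}$, which is then absorbed by the low-frequency restriction $2^k\lesssim s^{\delta_N}$. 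The rest — the Besov summation in $k$, the high-frequency reduction via \eqref{P_k} and Proposition \ref{proenergy}, and the time-integrability on $[1,t/2]$ — is routine given the a priori bounds \eqref{eqn: f}--\eqref{eqn: u}. This is precisely the ``similar argument to \cite{HPS}'' alluded to in the strategy section, and I would organize the write-up to follow that template, highlighting only the modifications forced by the parameter $\gamma<1$ (namely that the null gain from \eqref{symg} is weaker, proportional to $|\xi|^\gamma$, so one must spend the low-frequency smallness $2^{k\gamma}$ more carefully).
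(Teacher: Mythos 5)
There is a genuine gap, and it sits exactly where your argument claims the $[1,t/2]$ (and, similarly, the $[t/2,t]$) contribution is ``routine'': after applying the dispersive bound \eqref{linearwave0} you must control $\|P_k(|u(s)|^2)\|_{L^1}$, and your plan to do this ``using H\"older to place $|u|^2$ in $L^1$ of a ball of radius $\sim s^{\delta_N}$'' conflates frequency localization with spatial localization. The projection $P_k$ restricts the \emph{frequency} support; $P_k(|u|^2)$ is not supported in any spatial ball, so the only unweighted bound available is $\|P_k(|u|^2)\|_{L^1}\lesssim \|u(s)\|_{L^2}^2\lesssim \e_1^2$, which has no time decay at all. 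Consequently $t^{-1}\int_1^{t/2}\|P_k(|u|^2)\|_{L^1}\,\mathrm{d}s$ is $O(1)$ rather than $O(t^{-1})$, and the estimate collapses; the $L^6$/$L^\infty$ interpolation from \eqref{eqn: u} cannot rescue it because putting the product in $L^1$ over all of $\R^3$ forces at least one factor into $L^2$. Your fallback near $s=t$ (integrating by parts in $s$, or using \eqref{symg} alone) has the same defect: the null structure \eqref{symg} trades one power of $s$ for a weight $x$ on one profile, and the remaining factor still only contributes its non-decaying $L^2$ norm, which yields at best $t^{-1+2\delta_N+\delta}\log t$, not $t^{-1}$.

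The missing idea is the \emph{spatial} profile decomposition, which the paper flags as a key point: split $f=f_{\leq s^{1/8}}+f_{\geq s^{1/8}}$ according to physical-space localization and write $G=G_1+G_2$ as in \eqref{G_1}--\eqref{G_2}. For $G_1$ (at least one spatially localized input) no time oscillation is needed: one shows $G_1$ is bounded uniformly in a space stronger than $\dot B^2_{1,1}$ by putting the quadratic source in $L^1$ via the weighted interpolation $\|h\|_{L^1}\lesssim \|xh\|_{L^2}^{1/2}\|x^2h\|_{L^2}^{1/2}$ and the bounds \eqref{decayg_11}--\eqref{decayg_12}, and then \eqref{linearwave0} alone gives the $t^{-1}$ decay of $e^{it\Lambda}G_1$. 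For $G_2$ both inputs are spatially far, so $\|f_{\geq s^{1/8}}\|_{L^2}\lesssim s^{-1/8+\delta}$ by the a priori bound on $xf$ in \eqref{eqn: f}; this extra decay, combined with one integration by parts in $\eta$ via \eqref{symg} (which costs a weight but gains $s^{-1}$) and the $(t-s)^{-1}$ wave decay, makes the time integral converge to $t^{-1}$ with no logarithmic loss (the region $[t-1,t]$ is handled separately by Sobolev embedding). Without this physical-space splitting---or some substitute providing decay of an $L^1$-type norm of the bilinear source---your outline cannot reach the stated $t^{-1}$ bound.
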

The proof of the above proposition is analogous to the one in section 6 of \cite{HPS}.
We provide some of the details below.

\nl
{\it Proof of Proposition \ref{prodecayw}.}
Let us split $G$ into two parts, depending on the localization of the inputs.
More precisely, we let
$G = G_1 + G_2$
where 
\begin{align*}
G_1 & :=   G(f_{\leq s^{1/8} } , \bar{f}) + G(f, \bar{f}_{\leq s^{1/8} })
\\
G_2 & :=   G(f_{\geq s^{1/8} } , \bar{f}_{\geq s^{1/8} } ) \, .
\end{align*}
The component $G_1$ can be shown to be bounded in a weighted Sobolev space stronger than $\dot{B}^{2}_{1,1}$;
this directly gives the desired bound on $e^{it\Lambda} G_1$.
The decay of $e^{it\Lambda} G_2$ will instead be proven using the null structure \eqref{symg} in conjuction with the
improved small frequency behavior of the dispersive estimate for linear wave equation.
We will crucially use the fact that the $L^2$ norm of $f_{\geq s^{1/8}}$ decays in $L^2$.
Since the two terms in the definition of $G_1$ are similar we can reduce to consider $G_1$ and $G_2$ given by
\begin{align}
\label{G_1}
\hat{G}_1 & =   \int_1^t \int_{\R^3} {|\xi|}^\g e^{is \psi (\xi,\eta)} \hat{f_{\leq s^{1/8} }} (\xi-\eta,s) \overline{ \hat{f} }(\eta,s) \, \mathrm{d}\eta \mathrm{d}s
\\
\label{G_2}
\hat{G}_2 & =   \int_1^t \int_{\R^3} {|\xi|}^\g e^{is \psi (\xi,\eta)} \hat{f_{\geq s^{1/8} }} (\xi-\eta,s) \overline{ \hat{f_{\geq s^{1/8} }} } (\eta,s)\, \mathrm{d}\eta \mathrm{d}s \, .
\end{align}

\nl
{\it Decay estimate for $e^{it\Lambda} G_1$}.
To show that $G_1$ is bounded in  $\dot{B}^{2}_{1,1}$ we will interpolate weighted $L^2$ norms inside the time integral.
One can then exploit the ``small'' support of $f_{\leq s^{1/8}}$ to get improvements on these weighted norms, 
and on the decay of $e^{is\Delta}f_{\leq s^{1/8}}$.
Recalling that we are only considering frequencies $k$ such that $2^k \leq s^{\d_N}$, we aim to prove
\begin{align*}
\int_1^t  \, \sum_{k = -\infty}^{ \log s^{\d_N} } 2^{2k} {\| P_k \Lambda^\g e^{-is\Lambda} 
				\left( e^{is\Delta} f_{\leq s^{1/8}} e^{-is\Delta} \bar{f} \right) \|}_{L^1} \, \mathrm{d}s  \lesssim 1 \, .
\end{align*}
Converting a factor of $2^{(2-\g)k}$ into derivatives $\Lambda^{2-\g}$, throwing away the projection $P_k$, and performing the sum,
we see that is suffices to show
\begin{align*}
\int_1^t  s^{\g \d_N} {\| \Lambda^2 e^{-is\Lambda} \left( e^{is\Delta} f_{\leq s^{1/8}} e^{-is\Delta} \bar{f} \right) \|}_{L^1} \, \mathrm{d}s  \lesssim 1 \, .
\end{align*}
Since ${\| \cdot \|}_{L^1} \lesssim {\| x \cdot \|}_{L^2}^{1/2} {\| x^2 \cdot \|}_{L^2}^{1/2}$, 
the above estimate will follow from the inequalities
\begin{subequations}
\begin{align}
\label{decayg_11}
& {\left\| |x| \Lambda^2 e^{-is\Lambda} \left( e^{is\Delta} f_{\leq s^{1/8}} e^{-is\Delta} \bar{f} \right) \right\|}_{L^2}  
  \lesssim  s^{-7/4} \, ,
\\
\label{decayg_12}
& {\left\| {|x|}^2 \Lambda^2 e^{-is\Lambda} \left( e^{is\Delta} f_{\leq s^{1/8}} e^{-is\Delta} \bar{f} \right) \right\|}_{L^2} 
  \lesssim s^{-1} \, .
\end{align}
\end{subequations}
These two estimates have been already proven in \cite{HPS} under the same apriori assumptions made in \eqref{eqn: u}.
Therefore, we omit them and refer the reader to section 6.1 of \cite{HPS} for a detailed proof.

\nl
{\it Decay estimate for $e^{it\Lambda} G_2$}.
We write 
\begin{align*}
e^{it\Lambda} G_2 (t,x) 
& = 
  \int_1^t e^{i(t-s)\Lambda} \Lambda^{\g-1} \FF^{-1}_\xi \left[ \int_{\R^3} |\xi| e^{is \tilde{\psi} (\xi,\eta)} 
  \hat{f_{\geq s^{1/8} }} (\xi-\eta,s) \overline{ \hat{f_{\geq s^{1/8} }} } (\eta,s) \mathrm{d}\eta  \right] \, \mathrm{d}s
\end{align*}
where $\tilde{\psi} (\xi,\eta) = {|\xi-\eta|}^2 - {|\eta|}^2 = {|\xi|}^2 - 2 \xi \cdot \eta$.
We now want to use \eqref{symg} to integrate by parts in $\eta$.
By symmetry we can reduce to consider the following term:
\begin{equation}
\label{g_2decay}
\int_1^t e^{i(t-s)\Lambda}  \frac{1}{s} \, \Lambda^{\g-1} \FF^{-1}_\xi \left[ \int_{\R^3} \frac{\xi}{|\xi|}
e^{is \tilde{\psi} (\xi,\eta)} \,  \nabla_\eta \hat{f_{\geq s^{1/8} }} (\xi-\eta,s) \overline{ \hat{f_{\geq s^{1/8} }} } (\eta,s) \mathrm{d}\eta \right] \mathrm{d}s \, .
\end{equation}
The contribution of the time integral between $t-1$ and $t$ can be easily estimated by Sobolev embedding. 
To estimate the contribution from $1$ to $t-1$, we use the linear dispersive estimate for the wave equation \eqref{linearwave0},
and our large frequency cutoff convention, to bound it by
\begin{align*}
&\int_1^{t-1} \frac{1}{t-s} \,  \frac{1}{s} \, 
  \sum_{k = - \infty}^{ \log s^{\delta_N} } 2^{(\g+1)k} {\left\|  
  P_k \left(  e^{is\Delta} x f_{\geq s^{1/8}}  \,  e^{is\Delta} f_{\geq s^{1/8}}  \right)  \right\|}_{L^1} \, ds
\\
 & \lesssim \int_1^{t-1} \frac{1}{t-s} \,  \frac{1}{s} \,  s^{2\delta_N}  
 	{\left\| e^{is\Delta} x f_{\geq s^{1/8}} \, e^{is\Delta} f_{\geq s^{1/8}} \right\|}_{L^1} \, ds
\\
 & \lesssim \int_1^{t-1} \frac{1}{t-s} \,  \frac{1}{s} \,  s^{2\delta_N} 
 	{\left\|  x f \right\|}_{L^2} {\| f_{\geq s^{1/8}}  \|}_{L^2} \, ds
  \lesssim \int_1^{t-1} \frac{1}{t-s} \,  \frac{1}{s} \,  s^{2\delta_N}  \, s^\d \frac{1}{s^{1/8}} s^\d  \, ds \lesssim \frac{1}{t} \, .
  \hskip 80pt \Box
\end{align*}

\newcommand{\ephi}{e^{is\phi (\xi,\eta)}}

\newcommand{\dds}{\,\mathrm{d}\eta\mathrm{d}s}

\newcommand{\fg}{\widehat{f}(\xi-\eta,s)\widehat{g} (\eta,s)}

\newcommand{\dirphi}{\frac{\eta}{|\eta|}\cdot \partial_{\eta} \phi}

\newcommand{\phiovereta}{\frac{\phi}{|\eta|}}

\section{Estimates for $F$}\label{secF}
Recall that we are making the following a priori assumptions on $g$ and $f$:
\begin{align}
\label{eqn: gL}
& {\| \Lambda^{1/2} x G\|}_{L^2} \leq \e_1,
\qquad {\|e^{it\Lambda} x\Lambda G\|}_{L^{4/(1+\gamma)}} \leq t^{-1/4+3\gamma/4-2\alpha-3\delta} \e_1,
\\
\label{eqn: Lminus1G3}
& {\|e^{it\Lambda} G\|}_{L^\infty}  \leq t^{-1} \e_1, 
  \qquad {\|e^{it\Lambda}\Lambda^{-1} G\|}_{L^3} \leq t^{- 2\alpha - 3\delta} \e_1,
\\
\label{apriorif10}
& {\| x f \|}_{L^\infty}  \leq t^{\d} \e_1,  \qquad {\| x^2 f \|}_{L^2} \leq t^{1-2\alpha-\delta} \e_1 .
\end{align}
In this section we want to establish estimates for $F$ defined as
\begin{equation}\label{eqn: Fdef}
\widehat{F}(\xi,t) = 
  \int_0^t\int_{\mathbb{R}^3} \ephi \fg \,\dds \, .
\end{equation}
Recall that 
\begin{equation}\label{eqn: null}
\partial_\xi \phi(\xi,\eta) = -2\eta = 
  -2\frac{\eta}{|\eta|}\left(\frac{\eta}{|\eta|}\cdot \partial_\eta \phi(\xi,\eta) \right)-2\frac{\phi}{|\eta|}\frac{\eta}{|\eta|} \, ,
\end{equation}
and note that
$\partial_{\xi_i}^2 \phi (\xi,\eta) = 0$,
which in particular leads to $\partial_{\xi^i}^2 \ephi = -4s^2\eta_i^2 \ephi$.


\subsection{Estimate for $x F$}
In this section we aim to prove the following lemma:
\begin{lemma}
Let $F$ be defined by \eqref{eqn: Fdef}. Under the apriori assumptions \eqref{eqn: gL}--\eqref{apriorif10}
we have
\begin{equation}
{\| x F \|}_{L^2} \lesssim  t^\delta \e_1^2 \, .
\end{equation}
\end{lemma}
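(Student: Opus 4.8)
The strategy is to compute $xF$ in Fourier space as $\nabla_\xi \widehat F$, which produces two types of terms: one where $\nabla_\xi$ lands on the profile $\widehat f(\xi-\eta,s)$, and one where it lands on the phase $e^{is\phi}$ producing a factor $s\,\nabla_\xi\phi = -2s\eta$. The first term, after writing $\nabla_\xi \widehat f(\xi-\eta) = \nabla_\eta \widehat f(\xi-\eta)$ and recognizing it as the Fourier transform of a bilinear expression in $xf$ and $g$, is estimated directly by Plancherel and Hölder: one factor goes in $L^2$ using the a priori bound $\|xf\|_{L^2}\lesssim \e_1 s^\delta$, and the other, $e^{\mp is\Lambda}g$, is placed in $L^\infty$ using the pointwise decay $\|e^{\mp is\Lambda}g(s)\|_{\dot B^0_{\infty,1}}\lesssim \e_1 s^{-1}$, so that the time integral $\int_1^t s^{\delta_N}s^\delta s^{-1}\,\mathrm ds \lesssim t^\delta$ closes (invoking Convention \ref{convention1} to absorb powers of $|\xi|$ and $|\eta|^{-1}$ from the symbols as $s^{\delta_N}$). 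One should also dispose of the contribution of the time integral on $[0,1]$ by Sobolev embedding as usual.

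For the second term, where $\nabla_\xi$ hits the phase, I would use the pseudo-scaling identity \eqref{eqn: null}:
\[
s\,\nabla_\xi\phi\, e^{is\phi} = -2s\frac{\eta}{|\eta|}\Bigl(\frac{\eta}{|\eta|}\cdot\nabla_\eta\phi\Bigr)e^{is\phi} - 2s\frac{\phi}{|\eta|}\frac{\eta}{|\eta|}e^{is\phi}.
\]
In the first piece, $s(\nabla_\eta\phi)e^{is\phi} = -i\nabla_\eta e^{is\phi}$, so integrating by parts in $\eta$ moves the derivative onto $\widehat f(\xi-\eta,s)\overline{\widehat g}_\pm(\eta,s)$ and the symbol $\eta/|\eta|^2$; this again yields bilinear expressions of the type just handled (either $\nabla_\eta$ hits $\widehat f(\xi-\eta)$, giving $xf$ in $L^2$ times $e^{\mp is\Lambda}g$ in $L^\infty$, or it hits $\widehat g(\eta)$, giving $f$ in $L^\infty$ via \eqref{eqn: u} times something controlled by the weighted bound on $g$; the symbol derivative term is lower order). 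In the second piece, $\phi e^{is\phi} = -i\partial_s e^{is\phi}$, so I integrate by parts in $s$. This produces a boundary term at $s=t$ and $s=1$, plus terms where $\partial_s$ falls on $\widehat f(\xi-\eta,s)$ or $\overline{\widehat g}_\pm(\eta,s)$. Using the Duhamel equations \eqref{inteqf}--\eqref{inteqg}, $\partial_s \widehat f$ and $\partial_s\widehat g$ are themselves the bilinear integrands, so these become trilinear terms; the key is that $\partial_s \widehat g_\pm$ carries a factor $|\xi|^\gamma$ (now $\lesssim s^{\gamma\delta_N}$) and that $\partial_s$ applied to $\overline{\widehat f}(\eta,s)$ in $\widehat G$ gives back $\overline{|u|^2}$-type structure, both estimated by the dispersive bounds \eqref{eqn: u} and the $L^p$ bounds \eqref{eqn: gL}--\eqref{eqn: Lminus1G3} on $G$. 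The boundary term at $s=t$ is the one requiring the weighted bound $\|\Lambda^{1/2}xG\|_{L^2}\lesssim\e_1$ (or $\|e^{it\Lambda}\Lambda^{-1}G\|_{L^3}$ combined with a dispersive estimate), after one rewrites $\phi/|\eta| \cdot e^{is\phi}$-type factors and pulls out the $G$-structure via $\widehat G(\xi,s) = |\xi|^{-\gamma}\times(\text{the }\eta\text{-integral in }\widehat G)$.

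The main obstacle is the careful bookkeeping in the $\partial_s$ integration by parts: one must verify that every resulting trilinear term, and in particular the boundary term at $s=t$, is controlled by the a priori quantities with a power of $t$ no worse than $t^\delta$. This is where the relations \eqref{eqn: alpha} among $\alpha,\gamma,\delta$ are used—the $G$-bounds \eqref{eqn: gL}--\eqref{eqn: Lminus1G3} decay at rates like $t^{-2\alpha-3\delta}$ and $t^{-1/4+3\gamma/4-2\alpha-3\delta}$, and one needs $\alpha > 3\delta$ and $\alpha \le \gamma/2 - 10\delta$ precisely so that the time integrals converge (or grow no faster than $t^\delta$) after combining with the $s^{-1}$ or $s^{-1-\alpha}$ decay of the Schrödinger factors and the $s^{-1/8}$ or $s^{2\delta_N}$ losses. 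A secondary technical point is ensuring the low-frequency factors $|\eta|^{-1}$ created by \eqref{eqn: null} are genuinely harmless; under Convention \ref{convention1} they are replaced by $s^{\delta_N}$, but one should check this is consistent with the Littlewood–Paley structure of the $G$-norms, exactly as in the corresponding argument in \cite{HPS}.
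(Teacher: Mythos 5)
Your plan follows the same route as the paper: split $\nabla_\xi\widehat F$ into the term where the derivative falls on $\widehat f$ and the term with $s\nabla_\xi\phi$, use \eqref{eqn: null} on the latter, integrate by parts in $\eta$ (from the $\nabla_\eta\phi$ piece) and in $s$ (from the $\phi/|\eta|$ piece), and estimate the resulting boundary term and the $\partial_s\widehat f$, $\partial_s\widehat g$ terms using $e^{is\Delta}\partial_s f = u\,n$, $e^{is\Lambda}\partial_s g=\Lambda^{\gamma}|u|^2$, the dispersive bounds on $u$, the $s^{-1}$ decay of the wave part, and the $L^3$ bound on $e^{it\Lambda}\Lambda^{-1}G$ for the boundary term at $s=t$. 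This is exactly the paper's decomposition into the analogues of \eqref{eqn: xfsum1}, \eqref{eqn: xf21}--\eqref{eqn: xf25}.

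Two points in your sketch would not go through as written. First, Convention \ref{convention1} only caps frequencies from \emph{above}, so the factors $|\eta|^{-1}$ produced by the $\phi/|\eta|$ piece of \eqref{eqn: null} are \emph{not} ``replaced by $s^{\delta_N}$'': there is no lower frequency cutoff, and this negative power must be kept and paired with the wave factor. This is precisely why the a priori assumption ${\|e^{it\Lambda}\Lambda^{-1}G\|}_{L^3}\lesssim \e_1 t^{-2\alpha-3\delta}$ appears (boundary term and the $\partial_s f$ term), why the $\partial_s g$ term is estimated through $\Lambda^{-1+\gamma}|u|^2$ via Sobolev embedding, and why the $n_1$ contribution is handled through $\Lambda^{-2}n_1$ in $L^\infty$ using the Besov assumption in \eqref{data2}; no $|\xi|^{-\gamma}$ pull-out is needed since $g(t)=g(0)-iG(t)$ directly. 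Second, for the term where $\nabla_\eta$ falls on $\widehat g$, your proposed pairing of $u$ in $L^\infty$ against ``the weighted bound on $g$'' would require a bound on ${\|xG\|}_{L^2}$, which is not among the a priori quantities; only ${\|\Lambda^{1/2}xG\|}_{L^2}\lesssim\e_1$ is available, and the missing half derivative is a genuine low-frequency issue. The correct pairing is $e^{is\Delta}f$ in $L^6$ (decay $s^{-1+\delta}$) against $x(n_0+G)$ in $L^3$ via the embedding $\dot H^{1/2}\hookrightarrow L^3$, with the $\Lambda^{-1}n_1$ piece treated separately by commuting $x$ with $\Lambda^{-1}$ and invoking the weighted and Besov data assumptions \eqref{data2}. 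With these two corrections your outline reproduces the paper's proof.
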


\begin{proof}
We have that $\nabla_\xi \widehat{F}$ is given by a linear combination of terms of the form
\begin{align}
\label{eqn: xfsum1}
& \int_0^t\int_{\mathbb{R}^3}e^{is\phi(\xi,\eta)}\nabla_\xi\widehat{f}(\xi-\eta,s)\widehat{g}(\eta,s)\, \dds 
\\ 
\label{eqn: xfsum2}
& \int_0^t\int_{\mathbb{R}^3} s \nabla_\xi\phi \, e^{is\phi(\xi,\eta)}\widehat{f}(\xi-\eta,s) \widehat{g}(\eta,s)\, \dds.
\end{align}
Using \eqref{eqn: null} to integrate by parts in $\eta$ and $s$ in equation \eqref{eqn: xfsum2}, we have the following contributions:
\begin{align} 
\label{eqn: xf21}
& \int_0^t\int_{\mathbb{R}^3}e^{is\phi(\xi,\eta)}\nabla_\eta\widehat{f}(\xi-\eta,s)\widehat{g}(\eta,s)\, \dds
\\
\label{eqn: xf22}
& \int_0^t\int_{\mathbb{R}^3}e^{is\phi(\xi,\eta)}\widehat{f}(\xi-\eta,s)\nabla_\eta\widehat{g}(\eta,s)\, \dds
\\
\label{eqn: xf23}
& \int_{\mathbb{R}^3}te^{it\phi(\xi,\eta)}\widehat{f}(\xi-\eta,t)\frac{1}{|\eta|}\widehat{g}(\eta,t)\, \mathrm{d}\eta
\\
\label{eqn: xf24}
& \int_0^t\int_{\mathbb{R}^3}se^{is\phi(\xi,\eta)}\widehat{f}(\xi-\eta,s)\frac{1}{|\eta|}\partial_s\widehat{g}(\eta,s)\, \dds
\\
& \label{eqn: xf25}
\int_0^t\int_{\mathbb{R}^3}se^{is\phi(\xi,\eta)}\partial_s\widehat{f}(\xi-\eta,s)\frac{1}{|\eta|}\widehat{g}(\eta,s)\, \dds.
\end{align}
The terms \eqref{eqn: xfsum1} and \eqref{eqn: xf21} can be bounded as
\[ \|\eqref{eqn: xfsum1} \|_{L^2} \lesssim \int_0^t\|xf\|_{L^2}\|e^{is\Lambda}(n_0+G+\Lambda^{-1}n_1)\|_{L^\infty}\,\mathrm{d}s 
  \lesssim \varepsilon_1^2 \int_0^t s^\delta\frac{1}{s}\,ds \lesssim \varepsilon_1^2 t^\delta. \]

For the term \eqref{eqn: xf22}, we first split $g$ into $n_0+G$ and $i\Lambda^{-1}n_1$.
\[\|\eqref{eqn: xf22}\| \lesssim \int_0^t\|e^{is\Lambda}x(n_0+G)\|_{L^3}\|e^{is\Delta}f\|_{L^6}\,\mathrm{d}s +\int_0^t \|e^{is\Lambda}x\Lambda^{-1}n_1e^{is\Delta}f\|_{L^2} \,\mathrm{d}s.\]
For the first term, we use the Sobolev embedding and our assumption \eqref{data2}:
\begin{align*}
\int_0^t \|x(n_0+G)\|_{L^3}\|e^{is\Delta}f\|_{L^6}\,ds &\lesssim \int_0^t \|x(n_0+G)\|_{\dot{H}^{1/2}}\|e^{is\Delta}f\|_{L^6}\, \mathrm{d}s
\lesssim \varepsilon_1^2 \int_0^t s^{-1+\delta}\, \mathrm{d}s
\lesssim \varepsilon_1^2 t^\delta.
\end{align*}
For the second term, we commute $x$ and $\Lambda^{-1}$, using $x\Lambda^{-1}n_1 = -\frac{\partial}{\Lambda^3}n_1 + \Lambda^{-1}xn_1$, we get:
\begin{equation} \label{eqn: xf22a}
\int_0^t \|e^{is\Lambda}x\Lambda^{-1}n_1e^{is\Delta}f\|_{L^2} \,\mathrm{d}s \lesssim \int_0^t\|e^{is\Lambda}\Lambda^{-1}xn_1e^{is\Delta}f\|_{L^2}\,\mathrm{d}s + \int_0^t\|e^{is\Lambda}\frac{\partial}{\Lambda^3}n_1e^{is\Delta}f\|_{L^2} \,\mathrm{d}s.
\end{equation}
For the first integral on the right-hand side of \eqref{eqn: xf22a} we use H\"older and the dispersive estimates \eqref{disp} 
and \eqref{linearwave0}, to obtain
\begin{align*}
\int_1^t \|e^{is\Lambda} \Lambda^{-1} xn_1e^{is\Delta}f\|_{L^2} \,\mathrm{d}s
& \lesssim \int_1^t \|e^{is\Lambda} \Lambda^{-1} x n_1 \|_{L^3}\|e^{is\Delta}f\|_{L^6}\,\mathrm{d}s 
\lesssim \varepsilon_1 \int_1^t s^{-4/3+\delta}\|\Lambda^{-1/3}xn_1\|_{L^{3/2}}\,\mathrm{d}s
\\
&\lesssim \varepsilon_1 \int_1^t s^{-4/3+\delta}\|x n_1\|_{L^{9/7}}\,\mathrm{d}s
\lesssim \varepsilon_1 \|x^2 n_1\|_{L^2}.
\end{align*}
For the second term in \eqref{eqn: xf22a}, we have
\begin{align*}
\int_0^t\|e^{is\Lambda}\frac{\partial}{\Lambda^3}n_1e^{is\Delta}f\|_{L^2} \,\mathrm{d}s
& \lesssim \int_0^t \|e^{is\Lambda}\frac{\partial}{\Lambda^3}n_1 \|_{L^\infty}\|e^{is\Delta}f\|_{L^2} \,\mathrm{d}s
\\ & 
\lesssim \int_0^t s^{-1} \|\frac{\partial^3}{\Lambda^3}n_1\|_{L^1} \|f\|_{L^2} \,\mathrm{d}s 
\lesssim \e_1 t^\delta \|n_1\|_{\dot{B}^0_{1,1}}.
\end{align*}

To estimate the term \eqref{eqn: xf23} we first observe that \eqref{eqn: Lminus1G3},
and the assumptions \eqref{data} on the initial data combined with \eqref{linearwave}, give us
\begin{align} 
\label{estG+n_0}
\begin{split}
\|e^{is\Lambda}\Lambda^{-1}(n_0+G)\|_{L^3} 
& \lesssim \e_1 s^{-2\a-3\d} \, .
\end{split}
\end{align}
We then have
\begin{align*}
\|\eqref{eqn: xf23}\|_{L^2} &
  \lesssim t \|e^{it\Delta}f\|_{L^6} \|e^{it\Lambda}\Lambda^{-1}(n_0+G)\|_{L^3} 
  + t\|f\|_{L^2}\|e^{it\Lambda} \Lambda^{-2}m_0(i\nabla)n_1\|_{L^\infty} 
\\
& \lesssim t\, \e_1 t^{-1+\delta} \, \e_1 t^{-2\alpha+3\delta} + t \, \e_1 \frac{1}{t} \|n_1\|_{\dot{B}^0_{1,1}} \lesssim \varepsilon_1^2 t^\delta .
\end{align*}
Here, and in the remainder of the proof, we denote by $m_k$ a homogeneous multiplier of order $k$. We also implicitly use the fact that such multipliers operate on homogeneous Besov spaces $\dot{B}^{s+k}_{p,r}\rightarrow \dot{B}^s_{p,r}$. 

For the term \eqref{eqn: xf24}, we observe that $e^{is\Lambda}\partial_s g = \Lambda^\gamma |u|^2$, so that
\begin{align*}
\|\eqref{eqn: xf24}\|_{L^2} & \lesssim \int_1^t s \|e^{is\Delta}f\|_{L^6} \|\Lambda^{-1+\gamma}|u|^2\|_{L^3} \mathrm{d}s  
  \lesssim \varepsilon_1 \int_1^t s s^{-1+\delta} \|u\|^2_{L^{6/(2-\gamma)}}\mathrm{d}s  \lesssim \varepsilon_1^2 t^\delta \, ,
\end{align*}
where we use Sobolev's embedding for the second inequality.

Finally, for the term \eqref{eqn: xf25}, we use $e^{is\Delta}\partial_sf = uw$ to estimate
\begin{align*}
\|\eqref{eqn: xf25} \|_{L^2} & \lesssim \int_1^t s\|e^{is\Delta}\partial_s f\|_{L^6}\|e^{is\Lambda} \Lambda^{-1}(n_0+G)\|_{L^3} 
  + s\|\partial_s f\|_{L^2}\|e^{is\Lambda}\Lambda^{-2}m_0(i\nabla)n_1\|_{L^\infty} \, \mathrm{d}s \\
& \lesssim \int_1^t s \|u\|_{L^6}\|w\|_{L^\infty} s^{-2\alpha+3\delta} +s\|u\|_{L^\infty}\|w\|_{L^2}\frac{1}{s}\|n_1\|_{\dot{B}^0_{1,1}} \, \mathrm{d}s 
\\
& \lesssim \varepsilon_1^2 \int_1^t s s^{-1+\delta} s^{-1} s^{-2\alpha+3\delta} + s s^{-1-\alpha} s^{-1} \, \mathrm{d}s  
  \lesssim \varepsilon_1^2 t^\delta \, ,
\end{align*}
having used $\alpha > 3 \delta /2$.
\end{proof}

\subsection{Estimate for $x^2 F$}
In this last section we want to estimate ${\|x^2 F\|}_{L^2}$ and show
\begin{proposition}
Under the apriori assumptions \eqref{eqn: gL}--\eqref{apriorif10} we have
\begin{align*}
{\|x^2 F\|}_{L^2} \lesssim \e_1^2 t^{1-2\a-\d} \, .
\end{align*}
\end{proposition}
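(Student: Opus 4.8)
The plan is to compute $\nabla_\xi^2 \widehat{F}$ and classify the resulting terms according to how the two $\xi$-derivatives are distributed among the phase $\ephi$ and the input $\widehat{f}(\xi-\eta,s)$ (the factor $\widehat{g}(\eta,s)$ carries no $\xi$-dependence). This produces three groups: (i) both derivatives hit $\widehat{f}(\xi-\eta,s)$, giving a term with $\nabla_\xi^2\widehat{f}(\xi-\eta,s)$ and no extra powers of $s$, which is controlled directly by the a priori bound ${\|x^2 f\|}_{L^2}\leq \e_1 t^{1-2\a-\d}$ together with the decay of $e^{is\Lambda}(n_0+G+\Lambda^{-1}n_1)$ in $L^\infty$, exactly as in the $xF$ estimate for \eqref{eqn: xfsum1}; (ii) one derivative hits $\ephi$ and one hits $\widehat{f}(\xi-\eta,s)$, producing a factor $s\nabla_\xi\phi \cdot \nabla_\xi\widehat{f}(\xi-\eta,s)$; and (iii) both derivatives hit $\ephi$, producing the factor $\partial_{\xi_i}^2\ephi = -4s^2\eta_i^2\ephi$ noted after \eqref{eqn: null}, i.e. a term with $s^2|\eta|^2$ and no weights on the inputs.

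For group (ii), I would write $\nabla_\xi \widehat{f}(\xi-\eta,s) = -\nabla_\eta\widehat{f}(\xi-\eta,s)$ and use the pseudo-scaling identity \eqref{eqn: null} to replace $\nabla_\xi\phi = -2\eta$ by a multiple of $\tfrac{\eta}{|\eta|}(\tfrac{\eta}{|\eta|}\cdot\nabla_\eta\phi) + \tfrac{\phi}{|\eta|}\tfrac{\eta}{|\eta|}$, and then integrate by parts once in $\eta$ (for the first piece) and once in $s$ (for the second piece, using $\phi\, e^{is\phi} = -i\partial_s e^{is\phi}$). The $\eta$-integration by parts redistributes $\nabla_\eta$ onto either $\nabla_\xi\widehat f(\xi-\eta)$ — giving a term structurally like group (i), handled by ${\|x^2f\|}_{L^2}$ — or onto $\widehat{g}(\eta)$, giving $\nabla_\eta\widehat g = \widehat{xg}$, which up to the decomposition $g = n_0 + G + i\Lambda^{-1}n_1$ is controlled by \eqref{eqn: gL} (the bound on $e^{it\Lambda}x\Lambda G$ in $L^{4/(1+\g)}$), the data assumption \eqref{data2} on $\langle x\rangle^2(n_0,n_1)$, and a commutator $x\Lambda^{-1} = \Lambda^{-1}x - \partial/\Lambda^3$ as in \eqref{eqn: xf22a}; the $s$-integration by parts produces a boundary term at $s=t$ (handled like \eqref{eqn: xf23}) plus bulk terms with $\partial_s\widehat f = \widehat{uw}$ or $\partial_s\widehat g = \Lambda^\g\widehat{|u|^2}$ (handled like \eqref{eqn: xf24}--\eqref{eqn: xf25}). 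Throughout, Convention \ref{convention1} lets me replace harmless factors of $|\eta|$, $1/|\eta|$, and $\nabla_\xi\phi$ by $s^{\d_N}$ at the cost of arbitrarily small powers of $s$, which the slack $10\d$ in \eqref{eqn: alpha} absorbs.

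Group (iii) is the crucial one, and it is where this proof departs from \cite{HPS}. Here I would \emph{not} integrate by parts twice with \eqref{eqn: null}, since that would generate $x^2 G$, for which we have deliberately dropped the a priori bound (see the Remark after \eqref{aprioriG3}). Instead I use \eqref{eqn: null} only once: write one of the two factors of $\nabla_\xi\phi = -2\eta$ in terms of $\nabla_\eta\phi$ and $\phi$, integrate by parts a single time in $\eta$ (resp. in $s$), and land on a term of the schematic form $\int s\, e^{is\phi}\,\widehat{f}(\xi-\eta)\,\nabla_\eta(\eta\,\widehat g(\eta))\,\dds$ — equivalently, after Fourier inversion, $\int_1^t s\, e^{-is\Lambda}\big(e^{is\Delta}f\big)\big(e^{-is\Lambda}x\Lambda G + \text{(}n_0, n_1\text{ pieces)}\big)\,ds$ (plus the $\partial_s$ boundary and bulk terms). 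I then estimate this by Hölder, pairing $e^{is\Delta}f$ in $L^q$ against $e^{-is\Lambda}x\Lambda G$ in $L^{4/(1+\g)}$ via \eqref{eqn: gL}, with $q$ the Hölder conjugate index; interpolating the dispersive bounds ${\|e^{is\Delta}f\|}_{L^6}\lesssim \e_1 s^{-1+\d}$ and ${\|e^{is\Delta}f\|}_{L^\infty}\lesssim \e_1 s^{-1-\a}$ gives decay $s^{-1+O(\d)}$ or better for the relevant $q\in(6,\infty]$. The resulting time integrand is of order $s\cdot s^{-1+O(\d)}\cdot s^{-1/4+3\g/4-2\a-3\d}$, and after integration in $s$ the constraints $\a\leq\g/2-10\d$ and $\a\leq 1/6-10\d$ from \eqref{eqn: alpha} are exactly what make the exponent come out $\leq 1-2\a-\d$. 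The main obstacle is precisely this bookkeeping in group (iii): choosing the single integration-by-parts that avoids $x^2 G$, then matching the Lebesgue exponents so that the available dispersive decay of $e^{is\Delta}f$ and the $L^{4/(1+\g)}$ bound on $e^{-is\Lambda}x\Lambda G$ combine to beat $s^{1-2\a-\d}$ uniformly over $0<\g\leq 1$; the $n_0$ and $n_1$ contributions, the boundary terms, and the high-frequency reductions are all routine by comparison, following the template already set in the $xF$ estimate.
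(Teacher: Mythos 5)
Your overall architecture coincides with the paper's: the same three-way split according to where the two $\xi_i$-derivatives land (your groups (i),(ii),(iii) are the paper's $F_3$, $F_2$, $F_1$ from \eqref{eqn: F3}, \eqref{eqn: F2}, \eqref{eqn: F1}), the same single application of \eqref{eqn: null} followed by one integration by parts in $\eta$ and one in $s$ precisely to avoid producing $x^2G$, and the same idea of paying for the leftover factor of $s$ by pairing the $\partial_\eta\widehat g$ term against the $L^{4/(1+\g)}$ bound on $e^{is\Lambda}x\Lambda G$ from \eqref{eqn: gL}. (A minor deviation: in group (ii) the multiplier attached to the $\widehat{xg}$-term is of order zero, so the paper pairs it with the $L^2$ bound on $\Lambda^{1/2}xG$ via $\dot H^{1/2}\hookrightarrow L^3$ against $e^{is\Delta}xf$ in $L^6$, rather than with the $x\Lambda G$ bound as you suggest; your pairing would need an extra $\Lambda$ that the symbol does not supply.)

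However, the exponent bookkeeping in your key step (group (iii)) is wrong as stated, and this is not cosmetic. The H\"older conjugate of $4/(1+\g)$ is $q=4/(1-\g)$, which ranges over $(4,\infty]$, not $(6,\infty]$: for $0<\g<1/3$ one has $q<6$, so your $L^6$--$L^\infty$ interpolation does not even reach the relevant exponent, and the available decay there (from $L^2$--$L^6$ interpolation, as the paper uses) is only $\|e^{is\Delta}f\|_{L^{4/(1-\g)}}\lesssim \e_1 s^{-(3/4+3\g/4)}$, slower than your claimed $s^{-1+O(\d)}$. Conversely, for $\g>1/3$, where your claimed rate is the best one can hope for from the a priori bounds ($\|e^{is\Delta}f\|_{L^\infty}\lesssim\e_1 s^{-1-\a}$ with $\a\le 1/6$), your own integrand
\begin{equation*}
s\cdot s^{-1+O(\d)}\cdot s^{-1/4+3\g/4-2\a-3\d}
\end{equation*}
integrates to $t^{3/4+3\g/4-2\a+O(\d)}$, which exceeds the target $t^{1-2\a-\d}$ as soon as $3\g/4>1/4$, i.e. exactly in the regime $\g>1/3$ you single out; the constraints \eqref{eqn: alpha} do not rescue this. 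What actually closes the estimate, and what the paper does for the term \eqref{eqn: F12}, is that the $\g$-dependence of the two factors cancels identically: with $\|e^{is\Lambda}x\Lambda(n_0+G)\|_{L^{4/(1+\g)}}\lesssim \e_1 s^{-1/4+3\g/4-2\a-3\d}$ and $\|e^{is\Delta}f\|_{L^{4/(1-\g)}}\lesssim \e_1 s^{-(3/4+3\g/4)}$ one gets the integrand $s^{-2\a-3\d}$, uniformly in $\g$, hence the bound $t^{1-2\a-3\d}\le t^{1-2\a-\d}$. So the missing ingredient in your write-up is precisely this matching of the conjugate exponent $4/(1-\g)$ with the $\g$-dependent decay rate $s^{-(3/4+3\g/4)}$ of the Schr\"odinger factor; with your stated rates the proof does not go through.
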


Fix $i$ and differentiate twice with respect to $\xi_i$ in \eqref{eqn: Fdef}, generating three terms:
\begin{align}
\widehat{F}_1&=-4\int_0^t\int \ephi s^2\eta_i^2 \widehat{f}(\xi-\eta)\widehat{g}(\eta,s)\dds\label{eqn: F1}\\
\widehat{F}_2&=-4\int_0^t\int \ephi is \eta_i \partial_{\xi_i} \fg\dds \label{eqn: F2} \\
\widehat{F}_3&=\int_0^t\int \ephi \partial_{\xi_i}^2\fg \dds.
\label{eqn: F3}
\end{align}

\subsubsection{Estimate for $F_1$}
As a consequence of the second equality in \eqref{eqn: null}, we have
\begin{equation}
\eta_i^2 = \frac{\eta_i^2}{|\eta|}\frac{\eta}{|\eta|}\cdot \partial_\eta \phi +\frac{\phi}{|\eta|}\frac{\eta_i^2}{|\eta|}.
\end{equation}
Plugging this  into \eqref{eqn: F1} we obtain two terms (we omit the constant factor):
\begin{align}
& \int_0^t\int s^2 \ephi \frac{\eta_i^2}{|\eta|}\dirphi \, \fg\dds \label{eqn: F1-dphiterm} \, ,
\\
& \int_0^t \int s^2 \ephi\frac{\eta_i^2}{|\eta|}\phiovereta \fg \dds \label{eqn: F1-phiterm} \, .
\end{align}

\nl
{\it Integration by parts in $s$ -- the term \eqref{eqn: F1-phiterm}}. Note that
\[\phi \ephi =-i \partial_s \ephi, \]
and so \eqref{eqn: F1-phiterm} may be rewritten as a sum of the terms
\begin{align}
\label{eqn: F11}
\widehat{F}_{11}&=-\int i m_0(\eta)t^2 \ephi \widehat{f}(\xi-\eta,t)\widehat{g}(\eta,t) \mathrm{d}\eta
\\ 
& \quad + i\int_0^t\int m_0(\eta) s^2 \, \ephi \partial_s \left( \fg \right)\dds, \nonumber 
\\
\label{eqn: F13}
\widehat{F}_{13}&=2i\int_0^t\int m_0(\eta) s \, \ephi \fg \dds \, . 
\end{align}

\nl
{\it Integration by parts in $\eta$ -- the term \eqref{eqn: F1-dphiterm}}. 
Write out \eqref{eqn: F1-dphiterm} as a sum of terms
\[\int_0^t\int s^2 \frac{\eta_i^2}{|\eta|^2} \eta_j \partial_{\eta_j}\phi \, \ephi \fg\dds.\]
Fix one summand. Since
$s\partial_{\eta_j}\phi e^{is\phi} =\partial_{\eta_j}e^{is\phi}$,
integration by parts yields
\begin{align}
& -\int_0^t \int s \partial_{\eta_j}\left(\frac{\eta_i^2\eta_j}{|\eta|^2}\right)\ephi \fg \,\mathrm{d}\eta\mathrm{d}s \label{eqn: dhitssymbol1} 
\\
& -\int_0^t \int s \frac{\eta_i^2\eta_j}{|\eta|^2} \ephi \partial_{\eta_j} \left(\fg \right) \,\mathrm{d}\eta\mathrm{d}s \, .\label{eqn: dfg}
\end{align}
The first term is analogous to $\hat{F}_{13}$ in \eqref{eqn: F13}.
The quantity \eqref{eqn: dfg} gives two contributions:
\begin{align}
& \int_0^t \int m_1(\eta) s  \ephi \partial_{\eta_j}\widehat{f}(\xi-\eta)\widehat{g}(\eta)\dds
\\
& \int_0^t\int m_1(\eta) s  \ephi \widehat{f}(\xi-\eta)\partial_{\eta_j}\widehat{g}(\eta)\dds \, , \label{eqn: F12} 
\end{align}
where the symbol $m_1(\eta)$ denotes a multiplier which is homogenous of order 1. 
The first term above is analogous to the term $F_2$ in \eqref{eqn: F2} and will be estimated later. 
We denote the second term by $\widehat{F}_{12}$. 
We now proceed to estimate the terms $F_{11}$, $F_{12}$ and $F_{13}$,
defined respectively in \eqref{eqn: F11}, \eqref{eqn: F12} and \eqref{eqn: F13}.

\nl
{\it Estimate for $F_{11}$}. This term was defined in \eqref{eqn: F11}. The first term
\[\int m_0(\eta) t^2 \ephi \widehat{f}(\xi-\eta,t)\widehat{g}(\eta,t) \mathrm{d}\eta\]
can be dealt with by an $L^6-L^3$ estimate: we pair the zero-th order multiplier with $g=n_0+G +\Lambda^{-1}n_1$ to find a bound of
\begin{align*}
& t^2 {\|e^{it\Delta} f\|}_{L^6}  {\|e^{is\Lambda}(n_0+G)\|}_{L^3} + t^2\|xf\|_{L^2}\|e^{it\Lambda}\Lambda^{-1}n_1\|_\infty 
\\
& \lesssim \varepsilon_1 t {\|xf\|}_{L^2} t^{-1/3} + \varepsilon_1^2 t^\delta 
\lesssim \varepsilon_1^2 t^{2/3+\delta} .
\end{align*}
This is acceptable since $2/3 + \d \leq 1 - 2\alpha -\d$, see \eqref{eqn: alpha}.
For the second term in \eqref{eqn: F11}, we first expand the derivative
\begin{align*}
\partial_s \left(\widehat{f}(\xi-\eta)\widehat{g}(\eta)\right) = \partial_s\widehat{f}(\xi-\eta)\widehat{g}(\eta)
  + \widehat{f}(\xi-\eta)\partial_s \widehat{g}(\eta).
\end{align*}
Using
$\partial_s f = e^{is\Delta} un$ and $\partial_s g = e^{is\Lambda} \Lambda^{\g} |u|^2$,
we obtain a bound of the form
\[ \int_0^t s^2 {\|un\|}_{L^6} {\|n\|}_{L^3}\,\mathrm{d}s 
  + \int_0^t s^2 s^\delta {\|u\|}_{L^6} {\||u|^2\|}_{L^3}\,\mathrm{d}s \, .\]
Using the a priori assumptions \eqref{eqn: gL}, this is bounded by
\begin{align*}
& \varepsilon_1^2 \int_0^t s s^{-1+\delta} s^{-1/3} \,\mathrm{d}s + \varepsilon_1^2 \int_0^t s^{2+\delta} {(s^{-1+\d})}^3 \,\mathrm{d}s
\lesssim \varepsilon_1^2 \int_0^t s^{\delta -1/3}\,\mathrm{d}s \lesssim \varepsilon_1^2 t^{2/3 + \delta} \, .
\end{align*}

\nl
{\it Estimate for $F_{12}$.} The term $F_{12}$ was defined  in \eqref{eqn: F12} as a term of the form
\[\widehat{F}_{12} = \int_0^t \int m_1(\eta) s \, \ephi \widehat{f}(\xi-\eta)\partial_{\eta}\widehat{g}(\eta)\dds.\]
Up to a commutator term resulting from
$\eta \partial_\eta \widehat{g}(\eta) = \partial_{\eta}(\eta \widehat{g}(\eta)) -\widehat{g}(\eta)$,
we can apply H\"older's inequality with exponents
$p = 4/(1+\gamma)$ and  $p'= 4/(1-\gamma)$
to find a bound of the type
\[\int_0^t s \| m_0(\nabla) e^{is\Lambda} x \Lambda g \|_{L^{4/(1+\gamma)}}\|e^{is\Delta}f\|_{L^{4/(1-\gamma)}} \,\mathrm{d}s.\]
By \eqref{eqn: gL},
\[\| m_0(\nabla) e^{is\Lambda} x \Lambda (n_0+G) \|_{L^{4/(1+\gamma)}} \lesssim \varepsilon_1 s^{-1/4+3\gamma/4-2\alpha-3\delta} \, \]
and, by interpolating the $L^2$ and $L^6$ bounds, we have
\[\|e^{is\Delta} f\|_{L^{4/(1-\gamma)}}\lesssim \varepsilon_1 s^{-(3/4+3\gamma/4)}.\]

On the other hand, we have
\begin{align*}
\| e^{is\Lambda} x n_1\|_{L^{4/(1+\gamma)}} &\lesssim t^{-1/2+\gamma/2}\|\Lambda^{1-\gamma}(xn_1)\|_{L^{4/(3-\gamma)}}\\
&\lesssim t^{-1/2+\gamma/2}(\|x\Lambda n_1\|_{L^{12/(9+\gamma)}} + \|n_1\|_{L^{12/(9+\gamma)}})\\
&\lesssim t^{-1/2+\gamma/2}\|\langle x\rangle^2 \Lambda n_1\|_{L^2}.
\end{align*}
Combining these, we obtain the result.

\nl
{\it The term $F_{13}$.} This was defined in \eqref{eqn: F13}. Taking $L^6-L^3$ we have a bound of the form
\[
\int_0^t s \|u\|_{L^6}\|n\|_{L^3}\,\mathrm{d}s +\int_0^t s\|u\|_{L^6}\|e^{is\Lambda}\Lambda^{-1}n_1\|_{L^3}\,\mathrm{d}s.
\]
For the first term in the preceding, we have the bound 
\[\int_0^t s^{\delta-1/3}\,\mathrm{d}s  \lesssim t^{2/3+\delta}.\]
For the second term, we use the dispersive estimate and Sobolev embedding to obtain the bound
$t^{2/3}\|n_1\|_{L^{9/7}}$,
which is more than what is needed.

\subsubsection{Estimate for $F_2$} This was defined in \eqref{eqn: F2} and is of the form
\[\int_0^t\int \ephi i s \eta_i \partial_{\xi_i} \fg\dds.\]
We use \eqref{eqn: null} to obtain the two terms
\begin{align}
& -\int_0^t\int \ephi is \frac{\eta_i}{|\eta|} \dirphi \, \partial_{\eta_i} \fg\dds \label{eqn: F2-dphiterm}
\\
& -\int_0^t\int \ephi is \frac{\eta_i}{|\eta|}\frac{\phi(\xi,\eta)}{|\eta|} \partial_{\eta_i} \fg\dds \label{eqn: F2-phiterm} \, .
\end{align}
We integrate by parts in \eqref{eqn: F2-dphiterm}, obtaining terms
\begin{align}
& \int_0^t \partial_{\eta_j}\left(\frac{\eta_i\eta_j}{|\eta|^2}\right) \ephi \partial_{\eta_i} \fg\dds \label{eqn: F2-term1}
\\
& \int_0^t \frac{\eta_i\eta_j}{|\eta|^2}\ephi \partial^2_{\eta_i\eta_j}\fg\dds \label{eqn: F2-term2}
\\
&\int_0^t \frac{\eta_i\eta_j}{|\eta|^2}\ephi \partial_{\eta_i}\widehat{f}(\xi-\eta)\partial_{\eta_j}\widehat{g}(\eta,s)\dds \, . \label{eqn: F2-term3}
\end{align}
Notice that the multiplier in \eqref{eqn: F2-term1} is of order $-1$ while in \eqref{eqn: F2-term2} and \eqref{eqn: F2-term3} it is of order zero. 
In particular, \eqref{eqn: F2-term2} is similar to \eqref{eqn: F3} and can be estimated as in \eqref{estF_3} below, so we skip it.

For \eqref{eqn: F2-term1}, we use the Mikhlin multiplier Theorem\footnote{$|\eta| m_{-1}(\eta)$ 
is bounded on $L^p$, $1<p<\infty$.} and the second assumption in \eqref{eqn: Lminus1G3} and \eqref{apriorif10}, 
to find the bound
\begin{align*}
{\| \eqref{eqn: F2-term1} \|}_{L^2} & \lesssim
  \int_0^t {\|e^{is\Delta}xf\|}_{L^6} {\|e^{is\Lambda} m_{-1}(\nabla)(n_0+G) \|}_{L^3} \,\mathrm{d}s
  +\int_0^t \|xf\|_{L^2}\|e^{is\Lambda}\Lambda^{-2}n_1\|_{L^\infty}\,\mathrm{d}s\\
&\lesssim \int_0^t s^{-1} {\|x^2 f\|}_{L^2} {\|e^{is\Lambda} \Lambda^{-1} (n_0+G)\|}_{L^3} \,\mathrm{d}s + \varepsilon_1^2 t^\delta\\
& \lesssim \varepsilon_1^2 \int_0^t s^{-1} s^{1-2\a-\d} s^{-2\alpha-3\delta}\,\mathrm{d}s + \varepsilon_1^2 t^\delta
  \lesssim \varepsilon_1^2 t^{1-2\alpha-3\delta}\, .
\end{align*}

The term \eqref{eqn: F2-term3} can be estimated similarly using Sobolev's embedding and \eqref{eqn: gL}:
\begin{align*}
{\| \eqref{eqn: F2-term3} \|}_{L^2} & \lesssim
  \int_0^t {\|e^{is\Delta}xf\|}_{L^6} {\| e^{is\Lambda} x(n_0+G) \|}_{L^3} \,\mathrm{d}s + \int_0^t \|e^{is\Delta}xf\|_{L^6}
  \|e^{is\Lambda}\Lambda^{-1}xn_1\|_{L^3}\,\mathrm{d}s\\
& + \int_0^t \|e^{is\Delta}xf\|_{L^2}\|e^{is\Lambda}\frac{\partial}{\Lambda^3}n_1\|_{L^\infty}\,\mathrm{d}s \\
& \lesssim \int_0^t s^{-1} {\|x^2 f\|}_{L^2} {\| x(n_0+G) \|}_{\dot{H}^\frac{1}{2}} \,\mathrm{d}s +\int_0^t s^{-1} \|x^2 f\|_{L^2}\|\Lambda^{-1/3} x n_1\|_{L^{3/2}}\,\mathrm{d}s  \\
& + \int_0^t s^{-1}\|\frac{\partial^3}{\Lambda^3}n_1\|_{L^1} \|xf\|_{L^2}\,\mathrm{d}s \\
& \lesssim \varepsilon_1^2 \int_0^t s^{-1} s^{1-2\a-\d} \,\mathrm{d}s + \varepsilon_1 \int_0^t s^{-2\alpha-\delta}s^{-1/3}\|xn_1\|_{L^{9/7}}\,\mathrm{d}s 
+ \varepsilon_1^2 t^\delta\\ 
&\lesssim \varepsilon_1^2 t^{1-2\alpha-\delta}.
\end{align*}

We now integrate by parts in $s$ in the term \eqref{eqn: F2-phiterm} to find the terms
\begin{align}
&-\int_0^t\int \ephi s \frac{\eta_i}{|\eta|^2} \partial_s \partial_{\eta_i} \widehat{f}(\xi-\eta,s)\widehat{g}(\eta,s)\,\mathrm{d}\eta \mathrm{d}s 
&\quad (\equiv \widehat{F}_{21}) \label{eqn: F21} 
\\
& -\int_0^t\int \ephi s \frac{\eta_i}{|\eta|^2} \partial_{\eta_i} \widehat{f}(\xi-\eta,s)\partial_s \widehat{g}(\eta,s)\,\mathrm{d}\eta \mathrm{d}s 
&\quad (\equiv \widehat{F}_{22}) \label{eqn: F22}
\\
& \int e^{it\phi(\xi,\eta)} t \frac{\eta_i}{|\eta|^2} \partial_{\eta_i} \widehat{f}(\xi-\eta,t)\widehat{g}(\eta,t)\mathrm{d}\eta
& \quad (\equiv \widehat{F}_{23}) \label{eqn: F23}
\\
& -\int_0^t\int \ephi \frac{\eta_i}{|\eta|^2}  \partial_{\eta_i} \fg\dds \, . &\quad (\equiv \widehat{F}_{24}) \label{eqn: F24}
\end{align}
We estimate these four terms below.

\nl
{\it Estimate for $F_{21}$.} We first compute the $\partial_s$ derivative:
\[\partial_s \widehat{f}(\xi-\eta,s) = i\int e^{is\phi(\xi-\eta,\tau)} \widehat{f}(\xi-\eta-\tau,s)\widehat{g}(\tau,s)\,\mathrm{d}\tau.\]
The derivative $\partial_{\eta_j}$ now generates two terms:
\begin{gather}
2i\int_0^t\int\int \ephi s^2 \frac{\eta_i}{|\eta|^2} e^{is\phi(\xi-\eta,\tau)}
  \widehat{f}(\xi-\eta-\tau,s)\tau_j \widehat{g}(\tau,s)\widehat{g}(\eta,s)\,\mathrm{d}\eta\mathrm{d}\tau\mathrm{d}s  \label{eqn: F21-term1} 
\\
\int_0^t\int\int \ephi s \frac{\eta_i}{|\eta|^2} e^{is\phi(\xi-\eta,\tau)}
  \partial_{\eta_j}\widehat{f}(\xi-\eta-\tau,s)\widehat{g}(\tau,s) \widehat{g}(\eta,s)\,\mathrm{d}\eta\mathrm{d}\tau\mathrm{d}s. \label{eqn: F21-term2}
\end{gather}
For the first term we pair the multiplier with $\widehat{g}(\eta)$ and use $L^6-L^3$ to obtain
\begin{equation}
{\| \eqref{eqn: F21-term1} \|}_{L^2} \lesssim \int_0^t s^2 {\|u\partial n\|}_{L^6} {\|e^{is\Lambda} \Lambda^{-1} (G+n_0)\|}_{L^3}
  \,\mathrm{d}s+\int_0^t s^2\|u\partial n\|_{L^2}\|e^{is\Lambda} \Lambda^{-2}n_1\|_{L^\infty}\,\mathrm{d}s .\label{eqn: F21-term3}
\end{equation}
Estimating $\|e^{is\Lambda} \Lambda^{-1} (G + n_0)\|_{L^3}$ by \eqref{estG+n_0},
we can bound the first of the two expressions above by
\begin{align*}
\int_0^t s^2 {\|u\|}_{L^6} {\|\partial n\|}_{L^\infty} {\|e^{is\Lambda} \Lambda^{-1} (G+n_0)\|}_{L^3}\mathrm{d}s 
  &\lesssim \varepsilon_1^2 \int_0^t s^{2\d} s^{-2\alpha -3\d } \mathrm{d}s \lesssim  \varepsilon_1^2 t^{1-2\alpha-\delta}.
\end{align*}
For the second integral in \eqref{eqn: F21-term3} we have
\begin{align*}
\int_0^t s^2\|u\partial n\|_{L^2}\|e^{is\Lambda} \Lambda^{-2}n_1\|_{L^\infty}\,\mathrm{d}s &\lesssim \int_0^t s\|u\|_{L^6}
  \|\partial n\|_{L^3}\|n_1\|_{\dot{B}^0_{1,1}}\,\mathrm{d}s \\
&\lesssim \int_0^t \varepsilon_1^3 s^{-1/3+2\delta}\,\mathrm{d}s
\lesssim \varepsilon_1^3 t^{2/3+2\delta}.
\end{align*}

Moving on to \eqref{eqn: F21-term2}, we reproduce the same pairing as before and once again use $L^6-L^3$ and $L^2-L^\infty$ estimates
to find a bound
\begin{equation}\label{eqn: F21-term4}
\int_0^t s\|e^{is\Delta}(xf) \,n \|_{L^6} \|e^{is\Lambda}\Lambda^{-1} (n_0+G)\|_{L^3}\mathrm{d}s 
  + \int_0^t s \| e^{is\Delta}(xf) \, n \|_{L^2}\|e^{is\Lambda}\Lambda^{-2}n_1\|_{L^\infty}\,\mathrm{d}s.
\end{equation}
Observe that 
\[ \| e^{is\Delta}xf \|_{L^6} \lesssim s^{-1} \| x^2 f \|_{L^2} \lesssim \e_1 s^{-2\alpha - \d} \, .\]
Using this, together with $\| n\|_{L^\infty} \le \varepsilon_1 s^{-1}$ and \eqref{eqn: Lminus1G3},
 we obtain the desired bound of $\varepsilon_1^2 t^{1-2\a-\d}$ for the first term in \eqref{eqn: F21-term4}. 
For the second term, we can obtain a bound of $\e_1^3 t^\d$ by using
\[ \| e^{is\Delta}(xf) \, n \|_{L^2} \le \|xf\|_{L^2}\|n\|_{L^\infty} \lesssim \e^2_1 s^{-1 + \delta} \, ,\]
combined with the the linear dipersive estimate \eqref{linearwave} and the assumption on $n_1$ in \eqref{data2}.

\nl
{\it Estimate for $F_{22}$.} This term is defined in \eqref{eqn: F22}. 
We pair the multiplier with $\partial_s g = e^{is\Lambda} \Lambda^{\g} |u|^2$, use an $L^6-L^3$ estimate,
Sobolev's embedding and \eqref{apriorif10}, to find the bound
\begin{align*}
 {\| \eqref{eqn: F22} \|}_{L^2} & \lesssim \int_0^t s\|e^{is\Delta}xf\|_{L^6} {\|\Lambda^{-1+\g} {|u|}^2 \|}_{L^3} \,\mathrm{d}s
  \lesssim \varepsilon_1 \int_0^t s^{1-2\a-\d} {\| {|u|}^2 \|}_{L^{3/(2+\g)}} \,\mathrm{d}s 
\\
& \lesssim \varepsilon_1^2 \int_0^t s^{1-2\a-\d} {\| u \|}_{L^2} {\| u \|}_{L^{6/(1-2\g)}} \,\mathrm{d}s \lesssim \varepsilon_1^2 t^{1-2\a-\d} \, .
\end{align*}

\nl
{\it Estimate for $F_{23}$.} This term is defined in \eqref{eqn: F23}.
 We can use $L^6-L^3$ and $L^\infty-L^2$ as before to get
\begin{align*} {\| \eqref{eqn: F23} \|}_{L^2} &\lesssim t\|e^{it\Delta} xf\|_{L^6}\|\Lambda^{-1}e^{is\Lambda}(n_0+G)\|_{L^3}
\\
& +t\|xf\|_{L^2}\|e^{it\Lambda}\Lambda^{-2}n_1\|_{L^\infty}
\lesssim \varepsilon_1^2 t^{1-4\alpha-5\delta} \, 
\end{align*}

\nl
{\it Estimate for $F_{24}$.} This was defined in \eqref{eqn: F24}. It is dealt with by $L^6-L^3$ and $L^\infty-L^2$, in an identical fashion to the previous case.

\subsubsection{Estimate for $F_3$}
This was defined in \eqref{eqn: F3}. It can be dealt with by using an $L^2-L^\infty$ estimate leading to
\begin{align}
\label{estF_3}
\begin{split}
{\| (\eqref{eqn: F3} \|}_{L^2} &\lesssim \int_0^t {\|x^2f\|}_{L^2} {\|e^{is\Lambda}(n_0+G)+e^{is\Lambda}\Lambda^{-1}n_1\|}_{L^\infty}\,\mathrm{d}s\\
&\lesssim \int_0^t \|x^2f\|_{L^2}\|n\|_{L^\infty}\,\mathrm{d}s +\int_0^t \|x^2f\|_{L^2}\|\Lambda n_1\|_{\dot{B}^0_{1,1}}\,\mathrm{d}s
\lesssim \varepsilon_1^2 t^{1-2\alpha-\delta} \, . 
\end{split}
\end{align}
At this point all terms are accounted for. $\hfill \Box$

\addcontentsline{toc}{section}{Bibliography}

\end{document}